\begin{document}

\title{On the transfer of information in multiplier equations}

\author{Mirza Karamehmedovi\'c}
\address{mika@dtu.dk, Department of Applied Mathematics and Computer Science, Technical University of Denmark, DK-2800 Kgs. Lyngby, Denmark}

\author{David Winterrose}
\address{dawin@dtu.dk, Department of Applied Mathematics and Computer Science, Technical University of Denmark, DK-2800 Kgs. Lyngby, Denmark}

\maketitle

\begin{abstract}
We derive spectral width estimates for traces of tempered solutions of a large class of multiplier equations in $\mathbf{R}^n$. The estimates are uniform for solutions up to a given order. In the process, we find a rather explicit expression for a tempered fundamental solution of a multiplier. We successfully verify our spectral width estimates against numerical results in several scenarios involving the inhomogeneous Helmholtz equation in $\mathbf{R}^n$ with $n=1,\dots,9$. Our main result is directly applicable in the stability analysis of solutions of inverse source problems.
\end{abstract}






\tableofcontents

\newtheorem{theorem}{Theorem}
\newtheorem{lemma}{Lemma}
\newtheorem{definition}{Definition}
\newtheorem{corollary}{Corollary}
\newtheorem{remark}{Remark}

\newcommand{\BW}{\mathscr{B}}
\newcommand{\singsupp}[1]{{\rm sing}\,\,{\rm supp}\,#1}
\newcommand{\supp}{{\rm supp}\,\,}
\newcommand{\dbar}{d\hspace*{-0.08em}\bar{}\hspace*{0.1em}}
\newcommand{\Op}{\textnormal{Op}}
\newcommand{\PW}{\textnormal{PW}} 

\section{Introduction}\label{section:introduction}
Let $n\ge2$ be an integer, and write $\mathcal{F},\mathcal{F}^{-1}:\mathcal{S}'(\mathbf{R}^n)\rightarrow\mathcal{S}'(\mathbf{R}^n)$ for the Fourier transform and its inverse, respectively, as well as $\widehat{u}$ for $\mathcal{F}u$. 
Also, write $\langle x\rangle=(1+|x|^2)^{1/2}$ for $x\in\mathbf{R}^n$. Assume $p\in C^{\infty}(\mathbf{R}^n)$ is an elliptic symbol of H\"ormander class $S^{\mu}(\mathbf{R}^n)$ for some real $\mu$, that is, assume that for each multi-index $\alpha\in\mathbf{N}_0^n$ there is a constant $C_{\alpha}$ satisfying
\[
\left|\partial^{\alpha}p(\xi)\right|\le C_{\alpha}\langle\xi\rangle^{\mu-|\alpha|},\quad\xi\in\mathbf{R}^n,
\]
as well as that there are constants $C$ and $R$ satisfying
\[
|p(\xi)|\ge C\langle\xi\rangle^{\mu},\quad\xi\in\mathbf{R}^n,\,\,|\xi|\ge R.
\]
Let $g\in C^{\infty}(\mathbf{R}\setminus\{0\})$ with $|g(\xi)|$ bounded from below by a positive constant; bounded from above as $|\xi|\rightarrow0$; and at most polynomially increasing as $|\xi|\rightarrow\infty$. Now assume the symbol $p$ is of the form
\begin{equation}\label{eqn:p}
p(\xi)=g(\xi)\prod_{j=1}^N(|\xi|-r_j)^{q_j},\quad\xi\in\mathbf{R}^n,
\end{equation}
where $N$ is a natural number, $r_j$ are positive constants satisfying $r_1<r_2<\cdots<r_N$, and $q_j$ are positive integer constants. Next, define the operator $P:\mathcal{S}'(\mathbf{R}^n)\rightarrow\mathcal{S}'(\mathbf{R}^n)$ by
\[
(Pu)(\phi)=(\mathcal{F}^{-1}p\widehat{u})(\phi)=\widehat{u}(p\mathcal{F}^{-1}\phi),\quad u\in\mathcal{S}'(\mathbf{R}^n),\,\,\phi\in\mathcal{S}(\mathbf{R}^n),
\]
that is, formally,
\[
Pu(x)=\mathcal{F}^{-1}(p\widehat{u})(x)=\int_{\xi\in\mathbf{R}^n}e^{ix.\xi}p(\xi)\widehat{u}(\xi)=\widehat{u}(e^{ix.(\cdot)}p(\cdot)),
\]
for $\quad u\in\mathcal{S}'(\mathbf{R}^n)$ and $x\in\mathbf{R}^n$. The function $p$ is called the symbol of $P$. Since $p$ depends on only the co-tangent variable $\xi$, it is a multiplier, and $P$ is a multiplier operator. It is evident that $P$ is a Fourier (frequency)-domain filter, transforming the spectrum of its argument $u$ by multiplying it with its symbol. Thus, in general, parts of the spectrum of $u$ may be augmented and other parts suppressed by the action of $P$.  Also, if an operator $F$ maps $Pu$ to the restriction of $u$ to a subset of $\mathbf{R}^n$, then it might be expected that $F$ essentially inverts the action of $P$ in the Fourier domain. This transformation of the Fourier spectrum of $u$ and of $Pu$ by the action of $P$ and $F$, respectively, is what we here mean by 'transfer of information in multiplier equations,' and the purpose of this work is to quantify, in a general setting, this transfer of information.

To state our main result, assume $u\in\mathcal{S}'(\mathbf{R}^n)$ and $f\in\mathcal{E}'(\mathbf{R}^n)$ satisfy
\begin{equation}\label{eqn:MAIN}
Pu=f\quad\text{in}\,\,\mathbf{R}^n.
\end{equation}
Fix a positive $R$ and orthonormal vectors $e_1,e_2$ in $\mathbf{R}^n$ such that the circle
\[
\mathcal{C}=\{e_1R\cos\theta+e_2R\sin\theta,\,\theta\in[0,2\pi]\}
\]
lies in the complement of the singular support of $f$, but not necessarily in the complement of the support of $f$. Figure~\ref{figure:Fig1} illustrates the setup.
\begin{figure}
\begin{center}
\input{./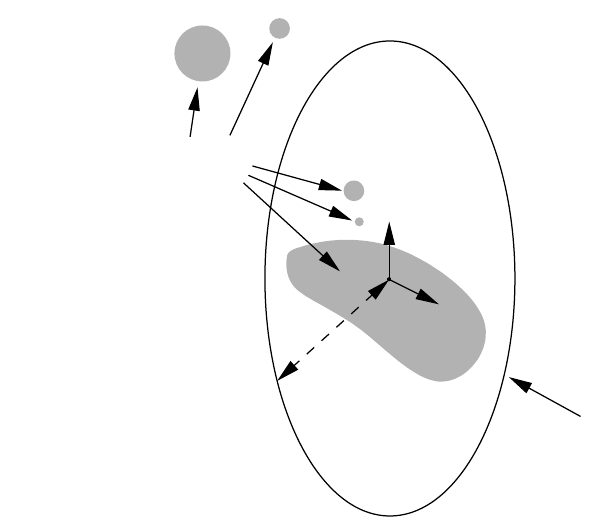_t}
\caption{transfer of information from 'source' to 'measurement,' i.e., from the inhomogeneity $f$ to the trace $U^{\mathcal{C}}=u|_{\mathcal{C}}$.}\label{figure:Fig1}
\end{center}
\end{figure}
Since $p$ is elliptic and $\mathcal{C}$ is in the exterior of ${\rm sing}\,\,{\rm supp}\,f$, there exists a nonempty open neighborhood of $\mathcal{C}$ in $\mathbf{R}^n$ where $u$ is smooth. In particular, the 'measurement,' i.e., the trace $U^{\mathcal{C}}=u|_{\mathcal{C}}$, is a smooth function on $\mathcal{C}$, and its Fourier coefficients
\begin{equation}\label{eqn:FC}
\widehat{U}^{\mathcal{C}}_m=\int_{\theta=0}^{2\pi}e^{-im\theta}u(e_1R\cos\theta+e_2R\sin\theta),\quad m\in\mathbf{Z},
\end{equation}
are well-defined. Now fix $\chi\in C_0^{\infty}(\mathbf{R}^n)$ satisfying
\[
\chi(\xi)=\begin{cases}1,\quad&|\xi|\le 1,\\0,\quad&|\xi|\ge2.\end{cases}
\]
For any positive $\varrho$ define $\widehat{u}_{\varrho}=\chi(\cdot/\varrho)\widehat{u}\in\mathcal{E}'(\mathbf{R}^n)$ and let
\begin{equation}\label{eqn:FRT}
\widehat{U}^{\mathcal{C}}_{\varrho,m}=\int_{\theta=0}^{2\pi}e^{-im\theta}u_{\varrho}(e_1R\cos\theta+e_2R\sin\theta),\quad m\in\mathbf{Z},
\end{equation}
be the $m$'th Fourier coefficient of the trace of $u_{\varrho}$ on $\mathcal{C}$. The definition makes sense since, by Theorem 7.1.14 on page 165 in~\cite{HI}, we have $\widehat{\widehat{u}}_{\varrho}\in C^{\infty}(\mathbf{R}^n)$ with $\widehat{\widehat{u}}_{\varrho}(x)=(\widehat{u}_{\varrho})_{\xi}(e^{-ix.\xi})$ for $x\in\mathbf{R}^n$, and from~\cite[p. 164]{HI} we have $u_{\varrho}(\phi)=(2\pi)^{-n}\widehat{\widehat{u}}_{\varrho}(\phi(-\cdot))=(-2\pi)^{-n}(\widehat{\widehat{u}}_{\varrho}(-\cdot))(\phi)$ for every $\phi\in\mathcal{S}(\mathbf{R}^n)$, so $u_{\varrho}$ is well-defined pointwise with 
\begin{equation}\label{eqn:urho}
u_{\varrho}(x)=(-2\pi)^{-n}\widehat{\widehat{u}}_{\varrho}(-x)=(-2\pi)^{-n}(\widehat{u}_{\varrho})_{\xi}(e^{ix.\xi}),\quad x\in\mathbf{R}^n.
\end{equation}
In the following, $u\in\mathcal{S}'^d(\mathbf{R}^n)$, with $d\in\mathbf{N}_0$, if $u\in\mathcal{S}'(\mathbf{R}^n)$ and there is a constant $C$ satisfying
\begin{equation}\label{eqn:S'd}
|u(\phi)|\le C\sum_{|\alpha|\le d}\sup_{x\in\mathbf{R}^n}\langle x\rangle^d|\partial^{\alpha}\phi(x)|,\quad\phi\in\mathcal{S}(\mathbf{R}^n).
\end{equation}
The following two results combine to characterize the behavior of the discrete Fourier spectrum $(\widehat{U}^{\mathcal{C}}_m)_{m\in\mathbf{Z}}$ of the trace $U^{\mathcal{C}}$. Let $m$ be an integer and $u\in\mathcal{S}'(\mathbf{R}^n)$.
\begin{lemma}\label{lemma:convergence} $\lim_{\varrho\rightarrow\infty}\widehat{U}^{\mathcal{C}}_{\varrho,m}=\widehat{U}^{\mathcal{C}}_m$.
\end{lemma}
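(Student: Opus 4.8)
The plan is to reduce the statement to the \emph{uniform} convergence $u_\varrho\to u$ on the compact curve $\mathcal{C}$, where $u$ denotes the genuine smooth representative of the distribution on the neighborhood of $\mathcal{C}$ whose existence was noted above. Subtracting \eqref{eqn:FC} from \eqref{eqn:FRT} gives
\[
\widehat{U}^{\mathcal{C}}_{\varrho,m}-\widehat{U}^{\mathcal{C}}_{m}=\int_{\theta=0}^{2\pi}e^{-im\theta}\bigl(u_\varrho-u\bigr)(e_1R\cos\theta+e_2R\sin\theta)\,\mathrm{d}\theta,
\]
so once $\sup_{x\in\mathcal{C}}|u_\varrho(x)-u(x)|\to0$ the right-hand side tends to $0$ by the triangle inequality and the lemma follows. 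It thus remains to establish this uniform convergence, which I would do by a localization.

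Fix $\psi\in C_0^\infty(\mathbf{R}^n)$ with $\psi\equiv1$ on a neighborhood of $\mathcal{C}$ and with $\supp\psi$ inside the open set on which $u$ is smooth, and split $u=v+w$, where $v=\psi u$ and $w=(1-\psi)u$. Then $v\in C_0^\infty(\mathbf{R}^n)\subset\mathcal{S}(\mathbf{R}^n)$, while $w$ vanishes on a neighborhood of $\mathcal{C}$, and $v=u$ on $\mathcal{C}$. Since the filter $u\mapsto u_\varrho=\mathcal{F}^{-1}(\chi(\cdot/\varrho)\widehat{u})$ is linear, $u_\varrho=v_\varrho+w_\varrho$ with the obvious notation. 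For the smooth part, a standard computation shows $\chi(\cdot/\varrho)\widehat{v}\to\widehat{v}$ in $\mathcal{S}(\mathbf{R}^n)$ as $\varrho\to\infty$: the factor $\chi(\cdot/\varrho)-1$ is supported in $|\xi|\ge\varrho$, each of its derivatives carries a power $\varrho^{-|\beta|}$, and every Schwartz seminorm of $\widehat{v}$ is arbitrarily small on $|\xi|\ge\varrho$. Applying the continuous map $\mathcal{F}^{-1}$ then yields $v_\varrho\to v$ in $\mathcal{S}(\mathbf{R}^n)$, hence uniformly, in particular on $\mathcal{C}$.

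The heart of the matter is to show $w_\varrho\to0$ uniformly on $\mathcal{C}$, and this is where I expect the real obstacle: although $w$ vanishes near $\mathcal{C}$, the low-pass filter is \emph{nonlocal}, so $w_\varrho(x)$ for $x\in\mathcal{C}$ depends on all of $w$ and one cannot simply invoke that $w$ is zero there. To handle this I would write the filter as a mollification. With $k=\mathcal{F}^{-1}\chi\in\mathcal{S}(\mathbf{R}^n)$ and $k_\varrho(y)=\varrho^n k(\varrho y)$, the convolution theorem gives $w_\varrho=c\,(w*k_\varrho)$ for a normalizing constant $c$, so that, interpreting \eqref{eqn:urho} via the pairing of $w$ with the Schwartz function $y\mapsto k_\varrho(x-y)$, one has $w_\varrho(x)=c\,w_y\bigl(k_\varrho(x-y)\bigr)$. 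Choosing $\delta>0$ so small that $w$ vanishes on $\{\mathrm{dist}(\cdot,\mathcal{C})<2\delta\}$ and a cutoff $\Theta\in C^\infty(\mathbf{R}^n)$, with bounded derivatives, equal to $1$ on a neighborhood of $\supp w$ and vanishing on $\{\mathrm{dist}(\cdot,\mathcal{C})<\delta\}$, gives $w_\varrho(x)=c\,w_y\bigl(\Theta(y)k_\varrho(x-y)\bigr)$.

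Finally I would invoke that $w\in\mathcal{S}'(\mathbf{R}^n)$ has finite order $N$ in the sense of \eqref{eqn:S'd}, bounding $|w_\varrho(x)|$ by a finite sum of seminorms $\sup_y\langle y\rangle^N\bigl|\partial^\alpha_y\bigl(\Theta(y)k_\varrho(x-y)\bigr)\bigr|$ with $|\alpha|\le N$. On $\supp\Theta$ one has $|x-y|\ge\delta$ for every $x\in\mathcal{C}$, and $\langle y\rangle\le C(1+|x-y|)$ there because $\mathcal{C}$ is bounded; moreover the rapid decay of $k$ gives $|\partial^\beta k_\varrho(x-y)|\le C_{\beta,M}\varrho^{\,n+|\beta|}(1+\varrho|x-y|)^{-M}$ for every $M$. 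Since $(1+t)^{N}(1+\varrho t)^{-M}\le(1+\varrho\delta)^{N-M}$ for $t\ge\delta$ and $\varrho\ge1$, expanding by Leibniz and choosing $M$ large makes each such seminorm $O(\varrho^{-1})$ uniformly in $x\in\mathcal{C}$. Hence $\sup_{\mathcal{C}}|w_\varrho|\to0$, which together with $\sup_{\mathcal{C}}|v_\varrho-u|\to0$ gives $\sup_{\mathcal{C}}|u_\varrho-u|\to0$ and completes the proof.
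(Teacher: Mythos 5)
Your proof is correct, and it takes a genuinely different---and in fact stronger---route than the paper's. The paper's argument is soft: since $\chi(\cdot/\varrho)\phi\to\phi$ in $\mathcal{S}(\mathbf{R}^n)$ for each fixed $\phi$, it gets $\widehat{u}_{\varrho}\to\widehat{u}$ weak-$\ast$ in $\mathcal{S}'(\mathbf{R}^n)$, hence $u_{\varrho}\to u$ in $\mathcal{S}'(\mathbf{R}^n)$ by continuity of $\mathcal{F}^{-1}$, and it then concludes pointwise convergence on $\mathcal{C}$ ``since $u_{\varrho}$ and $u$ are smooth in a neighborhood of $\mathcal{C}$.'' That last step is exactly the point your proposal refuses to take for granted: weak-$\ast$ convergence of distributions that happen to be smooth functions does not in general imply pointwise convergence (consider $\sin(\varrho x_1)\to0$ in $\mathcal{S}'(\mathbf{R}^n)$), and in addition passing the limit under the $\theta$-integral in \eqref{eqn:FC}--\eqref{eqn:FRT} requires uniform or dominated convergence, which the paper leaves implicit. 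Your decomposition $u=\psi u+(1-\psi)u$, the Schwartz-space convergence for the compactly supported smooth part, and the kernel estimate for the part vanishing near $\mathcal{C}$ (using the finite order of $w$ in the sense of \eqref{eqn:S'd}, the scaling bound $|\partial^{\beta}k_{\varrho}(x-y)|\le C_{\beta,M}\varrho^{n+|\beta|}(1+\varrho|x-y|)^{-M}$, and $|x-y|\ge\delta$ on $\supp\Theta$) together give uniform convergence $u_{\varrho}\to u$ on $\mathcal{C}$, which is precisely what legitimizes the exchange of limit and integration. In short: the paper's proof buys brevity at the cost of a final step that, as written, is a genuine leap; your proof buys full rigor (and even a quantitative $O(\varrho^{-1})$ rate for the far part) at the cost of length, and can be read as the missing justification for the paper's last sentence.
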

\begin{proof}
Since $\chi\in C_0^{\infty}(\mathbf{R}^n)$ and $\chi(0)=1$, we have $\lim_{\varrho\rightarrow\infty}\chi(\cdot/\varrho)\phi=\phi$ in $\mathcal{S}(\mathbf{R}^n)$ for every $\phi\in\mathcal{S}(\mathbf{R}^n)$. Hence $\lim_{\varrho\rightarrow\infty}\widehat{u}_{\varrho}=\lim_{\varrho\rightarrow\infty}\chi(\cdot/\varrho)\widehat{u}=\widehat{u}$ in $\mathcal{S}'(\mathbf{R}^n)$ with respect to its weak-$\ast$ topology. But $\mathcal{F}^{-1}$ is continuous on $\mathcal{S}'(\mathbf{R}^n)$, so $\lim_{\varrho\rightarrow\infty}u_{\varrho}=\lim_{\varrho\rightarrow\infty}\mathcal{F}^{-1}\widehat u_{\varrho}=u$ in $\mathcal{S}'(\mathbf{R}^n)$, and since $u_{\varrho}$ and $u$ are smooth in a neighborhood of $\mathcal{C}$, we have $\lim_{\varrho\rightarrow\infty}u_{\varrho}(x)=u(x)$ for every $x\in\mathcal{C}$.
\end{proof}
Now assume $u\in\mathcal{S}'^d(\mathbf{R}^n)$, $m\in\mathbf{Z}$, and write $\delta^{\rm Kr}$ for the Kronecker delta.
\begin{theorem}\label{thm:MAIN} There are constants $c'$ and, for any $\varrho>0$, $c_{\varrho}$ such that
\begin{align*}
\left|\widehat{U}^{\mathcal{C}}_{\varrho,m}\right|&\le c'+c_{\varrho}\sum_{j=1}^N\Bigl(\max\{1,|m|^{q_j},|m|^d\}|J_m(Rr_j)|\Bigr.\\&\Bigl.+\max\{1,|m|^{q_j-1},\delta^{\rm Kr}_{d\ge1}|m|^{d-1}\}|J_{m+1}(Rr_j)|\Bigr).
\end{align*}
\end{theorem}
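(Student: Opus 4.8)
The plan is to convert the Fourier coefficient into a single distributional pairing carrying an explicit Bessel kernel, and then to read off the $|m|$-dependence from the way $p$ fails to be elliptic across the spheres $S_j=\{|\xi|=r_j\}$. Concretely, I would use \eqref{eqn:urho} to write $u_\varrho(x)=(-2\pi)^{-n}(\widehat u)_\xi\bigl(\chi(\xi/\varrho)e^{ix.\xi}\bigr)$, substitute $x=R(\cos\theta\,e_1+\sin\theta\,e_2)$ into \eqref{eqn:FRT}, and --- since $\widehat u_\varrho$ has compact support and $u_\varrho$ is smooth near $\mathcal C$ --- interchange the $\theta$-integral with the pairing. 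Decomposing $\xi=\xi'+\xi''$ along $\operatorname{span}(e_1,e_2)$ and its orthogonal complement and writing $(e_1.\xi)+i(e_2.\xi)=\rho_\xi e^{i\phi_\xi}$, the phase becomes $R\rho_\xi\cos(\theta-\phi_\xi)$, so the Jacobi--Anger identity collapses the inner integral to $2\pi i^m J_m(R\rho_\xi)e^{-im\phi_\xi}$. This gives $\widehat U^{\mathcal C}_{\varrho,m}=c\,\widehat u\bigl(\Psi_{\varrho,m}\bigr)$ with $\Psi_{\varrho,m}(\xi)=\chi(\xi/\varrho)J_m(R\rho_\xi)e^{-im\phi_\xi}$, which lies in $C_0^\infty(\mathbf R^n)$ because $J_m(R\rho_\xi)e^{-im\phi_\xi}$ equals $(R\rho_\xi)^{|m|}e^{-im\phi_\xi}$ times a smooth even function of $\rho_\xi$ and so extends smoothly across $\rho_\xi=0$.

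Next I would bring in the equation $p\widehat u=\widehat f$. Off the spheres $S_j$ the symbol is nonvanishing, so there $\widehat u=\widehat f/p$ is smooth, and pairing this part with $\Psi_{\varrho,m}$ is bounded, using $|J_m|\le1$, by a quantity independent of $m$; its $\varrho$-uniform portion I would put into $c'$ and the remainder into $c_\varrho$. The essential $|m|$-dependence comes from $S_j$, where $p$ vanishes to order $q_j$. Using the explicit tempered fundamental solution obtained earlier, I would represent the relevant part of $\widehat u$ near $S_j$ as a finite-part regularization of $\widehat f/p$ together with homogeneous solutions supported on $S_j$, i.e.\ sums of $a_{j,l}(\omega)\,\delta^{(l)}(|\xi|-r_j)$ with $l\le q_j-1$, where the coefficients $a_{j,l}$ are distributions on $S_j$ whose order is controlled by the temperedness index $d$ through \eqref{eqn:S'd}.

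In polar coordinates each such contribution becomes a differential operator applied to $r^{n-1}\Psi_{\varrho,m}(r\omega)$ and evaluated at $r=r_j$: the finite part and the radial jets contribute radial derivatives of order up to $q_j-1$, while the order of the coefficients $a_{j,l}$ contributes tangential derivatives governed by $d$. Each radial derivative of $J_m(R\rho_\xi)$ is resolved by the recurrence $J_m'(z)=(m/z)J_m(z)-J_{m+1}(z)$, and each tangential derivative acts on $e^{-im\phi_\xi}$ or again on $J_m(R\rho_\xi)$; in both cases one factor of order $|m|$ is produced and part of the expression is shifted from $J_m$ to $J_{m+1}$. On $S_j$ the Bessel argument is $Rr_j|\omega'|=Rr_j\sin\psi$, with $\psi$ the angle between $\xi$ and $\operatorname{span}(e_1,e_2)$. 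Collecting the powers produced by the multiplicity $q_j$ and by the order $d$ yields the prefactors $\max\{1,|m|^{q_j},|m|^d\}$ on the $J_m$-part and $\max\{1,|m|^{q_j-1},\delta^{\rm Kr}_{d\ge1}|m|^{d-1}\}$ on the $J_{m+1}$-part, the Kronecker factor reflecting that no index shift survives when $d=0$.

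The main obstacle is the final reduction from these surface averages to the equatorial values $|J_m(Rr_j)|$ and $|J_{m+1}(Rr_j)|$. Writing the surface integral as a latitude integral with weight proportional to $\sin\psi\,\cos^{n-3}\psi$ (the case $n=2$ being immediate, as then $|\omega'|\equiv1$), I would prove a bound of the form $\int_0^{\pi/2}|J_m(Rr_j\sin\psi)|\,w(\psi)\,d\psi\le C\,|J_m(Rr_j)|$, uniform in $m$ and in $r_j$, and similarly with $m+1$. Such a bound holds because, for a fixed argument, $s\mapsto J_m(Rr_j s)$ concentrates near $s=1$ as $|m|$ grows and is dominated there by its endpoint value; making this uniform in $m$ and in the radius, and absorbing the radius- and cutoff-dependent factors into $c_\varrho$, is where the real work lies. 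Once it is in place, summing over $j$ and the finitely many derivative orders gives the displayed estimate.
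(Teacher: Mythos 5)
Your plan retraces the paper's own proof in all essential respects: the contour/Jacobi--Anger computation collapsing the $\theta$-integral to $2\pi i^m J_m(R\rho_\xi)e^{-im\phi_\xi}$ is Lemma~\ref{lemma:integral}, and the resulting identity expressing $\widehat U^{\mathcal C}_{\varrho,m}$ as a pairing of $\widehat u$ with the Bessel kernel is Lemma~\ref{lemma:formula}; the splitting of $\widehat u$ into the finite-part regularization $\widehat f\,\mathfrak{p}^{-1}$ plus layers $\delta^{(l)}(|\xi|-r_j)$ carrying sphere distributions of order tied to $d$ is exactly the paper's particular-plus-homogeneous decomposition (Lemmas~\ref{lemma:particul} and~\ref{lemma:PWd}, the unnumbered layer lemma, Corollary~\ref{cor:asymp_particul} and Theorem~\ref{thm:homogen}); and the conversion of radial and tangential derivatives into powers of $|m|$ through $J_m'(z)=(m/z)J_m(z)-J_{m+1}(z)$ is Lemma~\ref{lemma:diff_Bessel}. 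Two bookkeeping remarks: your restriction of the layers to $l\le q_j-1$ is correct, and in fact sharper than the paper's relation~\eqref{eqn:two}; on the other hand, the finite-part distributions $(r-r_j)^{-k}_{\pm}$ with $k\le q_j$ are defined through $k$-th derivatives of the test function, so radial derivatives up to order $q_j$ (not $q_j-1$) occur --- which is precisely why the prefactor is $|m|^{q_j}$, as you nonetheless correctly state.

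The one step that fails as written is your key inequality $\int_0^{\pi/2}|J_m(Rr_j\sin\psi)|\,w(\psi)\,d\psi\le C\,|J_m(Rr_j)|$ with $C$ uniform in $m$: it is false whenever $Rr_j$ is a positive zero of $J_m$, since then the right-hand side vanishes while the left-hand side does not. The repair is cheap and consists of two observations. First, for $|m|\ge Rr_j$ the function $J_{|m|}$ is positive and increasing on $[0,Rr_j]$ (its first critical point lies beyond $|m|$), so the inequality holds pointwise, hence with $C=\int_0^{\pi/2}w$; this covers all but finitely many $m$. Second, for the finitely many exceptional $m$ one uses $|\widehat U^{\mathcal C}_{\varrho,m}|\le2\pi\sup_{\mathcal C}|u_\varrho|$ together with the fact that $J_m$ and $J_{m+1}$ have no common positive zeros, so the full two-term right-hand side of the theorem is bounded below for those $m$ and the estimate is recovered by enlarging $c'$ and $c_\varrho$. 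Note also that for the layer part no integrated bound is needed at all: since the sphere coefficients are distributions of finite order, it suffices to bound sup-norms of finitely many derivatives of the kernel over $S^{n-1}$, which is how the paper proceeds (via the operators $A_{k,j}$ in the proof of Theorem~\ref{thm:homogen}); the same endpoint-domination fact, with the same finitely-many-$m$ caveat, is what the paper implicitly invokes both there and in Corollary~\ref{cor:asymp_particul}, so your proposal has the merit of making explicit a point the paper glosses over.
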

\begin{remark}
In the absence of any auxiliary condition ensuring the uniqueness of solution of $Pu=f$, Theorem~\ref{thm:MAIN} distinguishes between classes of solutions according to their order $d$ as tempered distributions. In this sense, the assumption $u\in\mathcal{S}'^d(\mathbf{R}^n)$ may be regarded a weak substitute for a condition implying uniqueness. When the uniqueness of solution of $Pu=f$ is ensured, such as by the Sommerfeld radiation condition at infinity in the examples in sections~\ref{subsec:pointsource} and \ref{subsec:integrable}, the distributional order $d$ of $u\in\mathcal{S}'^d(\mathbf{R}^n)$ is determined by the problem dimension $n$ and the distributional order of $f$.
\end{remark}
\begin{remark}
Of all Euclidean spheres, only $S^0$, $S^1$ and $S^3$ admit a topological group structure~\cite{Megia-2007}. Therefore, it makes sense to define the Fourier transform of $u|_{S^{n-1}}$ only for $n=1$, $n=2$ and $n=4$. For other dimensions $n$, it is of course possible to pick specific bases of, say, $L^2(S^{n-1})$ and treat the projections of $u|_{S^{n-1}}$ onto the basis vectors as 'the Fourier coefficients of the measurement.' This approach, however, is rather arbitrary and we here choose to compute the Fourier coefficients of the measurement in terms of integrals over great circles for all dimensions $n$. Our analysis therefore estimates the magnitude of the spectral content of the measurement along any chosen direction in $S^{n-1}$.
\end{remark}
\begin{remark}
The main information contained in Theorem~\ref{thm:MAIN} is the expected upper bound on the spectral location $|m|$ of the onset of rapid decay of the Fourier coefficients $\widehat{U}^{\mathcal{C}}_{\varrho,m}$; call this spectral location the 'bandwidth' of the measurement $U^{\mathcal{C}}$. Theorem~\ref{thm:MAIN} allows us to estimate this bandwidth in terms of the behavior of the product $|m|^aJ_{m}$ with increasing $|m|$, which is independent of the parameter $\varrho$ in $\widehat{U}^{\mathcal{C}}_{\varrho,m}$, and in principle also independent of the dimension $n$ of the problem $Pu=f$. However, we show in Lemma~\ref{lemma:H} of Section~\ref{sec:ispHelm} that in certain cases additional, physically motivated conditions on the solution $u$ may make the bandwidth estimate dimension-dependent.
\end{remark}
\begin{remark}
The integral in~\eqref{eqn:FRT} is the Funk-Radon transform of the integrand, evaluated at a single chosen direction $\nu\in S^{n-1}$ orthogonal to the plane of $\mathcal{C}$.
\end{remark}

Theorem~\ref{thm:MAIN} holds for a large class of pseudodifferential operators $P$, in particular all such operators whose symbol $p(\xi)$ is independent of the base variable $x$ and that can be transformed by a diffeomorphic pullback to a symbol with a radially symmetric zero set, as in~\eqref{eqn:p}. A simple, but from a modeling perspective important, class of operators covered by our analysis are the differential operators of the form $P=\sum_{j=0}^Mc_j(-\Delta)^j$ with constants $c_j$ such that at least one of the zeros of the polynomial $t\mapsto\sum_{j=0}^Mc_jt^{2j}$ is positive.
More complicated variants may also be considered. For example, if  $A$ is a real symmetric positive-definite $n \times n$ matrix with spectral decomposition $A=Q^T \Lambda Q$, and $b\in \mathbf{R}^n$ and $c\in \mathbf{C}$ are constants, then the operator $P$ with symbol $p(\xi)=\sum_{j=0}^M c_j( \xi \cdot A \xi + b\cdot \xi + c)^j$ is amenable to the same type of analysis after pullback by the diffeomorphism $\Phi(r,\omega)=rQ^T\Lambda^{-\frac{1}{2}}\omega-\frac{1}{2}A^{-1}b$. The transformed symbol is $(p\circ \Phi)(r) =\sum_{j=0}^M c_j( r^2 - \frac{1}{4}b\cdot A^{-1}b+c)^j $, and we require this to have at least one positive root.
The simple case includes, e.g., the Helmholtz operator $P=\Delta+k^2$, $k>0$, that is used in the modeling of time-harmonic acoustic and electromagnetic waves, and that also appears in the time-independent Schr\"odinger equation with negative constant potential. Thus, one application of our work is in inverse source problems~\cite{Bao-2010,Griesmaier-2012,Griesmaier-2014,Griesmaier-2016,Griesmaier-2017,2018a-bandwidth,2018b-multi_freq_isp,2020a-Helmholtz_3D}.
Here, given a multiplier equation $Pu=f$, the 'source' $f$ is to be reconstructed from a 'measurement' of $u$, which is the trace of $u$ on a 'measurement set.' Quantifying the transfer of information between $u$ and $f$ allows us to estimate, e.g., which frequency components of the source cannot be reconstructed stably in the presence of measurement noise of some specified frequency and amplitude. See, e.g.,~\cite{Xu-2006,Pierri-2020,Pierri-2021} and references therein for instances of the spectral analysis of electromagnetic radiation operators and applications in, e.g., antenna design and measurements. References~\cite{Griesmaier-2012,Griesmaier-2014,Griesmaier-2016,Griesmaier-2017} characterize spectrally the \textit{far-field} Dirichlet trace of $u$ on $S^1$ and $S^2$ when $P$ is the Helmholtz operator on $\mathbf{R}^2$ and $\mathbf{R}^3$, respectively. In particular, in~\cite{Griesmaier-2017} and~\cite{Griesmaier-2014} the singular spectrum of the source-to-far-field operator (also called the restricted Fourier transform there) is specified as a low-pass filter. In~\cite{2018a-bandwidth} and~\cite{2020a-Helmholtz_3D} we characterize the spectrum of the near-field Dirichlet trace on $S^1$ and $S^2$ of solutions of the inhomogeneous Helmholtz equation in $\mathbf{R}^2$ and $\mathbf{R}^3$, respectively.

Section~\ref{sec:preparatory} contains definitions of particular distributions, as well as proofs of technical lemmas, needed for the proof of Theorem~\ref{thm:MAIN} in Section~\ref{sec:proof}. In particular, in Section~\ref{sec:preparatory} we find a rather explicit form of a fundamental solution $\mathcal{F}^{-1}\mathfrak{p}^{-1}\in\mathcal{S}'(\mathbf{R}^n)$ of $P$. In Section~\ref{sec:ispHelm} we test the spectral cutoff estimate computed using Theorem~\ref{thm:MAIN} against computed $\widehat{U}^{\mathcal{C}}_m$-spectra for the Helmholtz equation in $\mathbf{R}^n$, in several dimensions $n$ and for different types of 'sources' $f$. Finally, we summarize our results in Section~\ref{section:conclusion}.

\section{Preparatory definitions and results}\label{sec:preparatory}
Define
\[
\mathcal{T}_m=\int_{\theta=0}^{2\pi}e^{-i m\theta}e^{i a(X_1\cos\theta+X_2\sin\theta)},
\]
where $a\in\mathbf{R}\setminus\{0\}$, $m\in\mathbf{Z}$, and where $X_1$ and $X_2$ are arbitrary complex constants. Writing $J_m$ for the Bessel function of the first kind and integer order $m$, we have
\begin{lemma}\label{lemma:integral}\[\mathcal{T}_m=\begin{cases}2\pi i^m J_m(a|X_1+i X_2|)\exp{-i m\angle(X_1+i X_2)},\quad&X_1+iX_2\neq0,\\2\pi\delta^{\rm Kr}_{m=0},\quad&X_1+iX_2=0.\end{cases}\]
\end{lemma}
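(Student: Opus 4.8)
The plan is to collapse the real-linear combination $X_1\cos\theta+X_2\sin\theta$ into a single phase-shifted cosine and then to expand the resulting exponential in a Jacobi--Anger series, so that the $\theta$-integral reduces to an orthogonality relation. Setting $Z=X_1+iX_2$, the harmonic-addition identity gives $X_1\cos\theta+X_2\sin\theta=\rho\cos(\theta-\phi)$ with $\rho=|Z|$ and $\phi=\angle Z$ whenever $X_1,X_2$ are real; indeed the matching conditions $\rho\cos\phi=X_1$ and $\rho\sin\phi=X_2$ are precisely the statement $Z=\rho e^{i\phi}$. The integrand then becomes $e^{-im\theta}e^{ia\rho\cos(\theta-\phi)}$, which is exactly the shape the Jacobi--Anger expansion is tailored for.

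Next I would substitute the expansion $e^{iz\cos\alpha}=\sum_{k\in\mathbf{Z}}i^kJ_k(z)e^{ik\alpha}$ with $z=a\rho$ and $\alpha=\theta-\phi$. Since the Bessel coefficients $J_k(a\rho)$ decay super-exponentially in $|k|$ (like $(a\rho/2)^{|k|}/|k|!$), the series converges absolutely and uniformly in $\theta$, so term-by-term integration is legitimate. The orthogonality relation $\int_0^{2\pi}e^{i(k-m)\theta}\,d\theta=2\pi\delta^{\rm Kr}_{k=m}$ then annihilates every term except $k=m$, yielding $\mathcal{T}_m=2\pi i^mJ_m(a\rho)e^{-im\phi}=2\pi i^mJ_m(a|X_1+iX_2|)e^{-im\angle(X_1+iX_2)}$, the first branch of the claim. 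In the degenerate case $X_1+iX_2=0$ with real amplitudes one has $X_1=X_2=0$, so the exponential is identically $1$ and $\mathcal{T}_m=\int_0^{2\pi}e^{-im\theta}\,d\theta=2\pi\delta^{\rm Kr}_{m=0}$, which is the second branch.

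The hard part will be the passage to genuinely complex $X_1,X_2$, because $\mathcal{T}_m$ is an entire function of $(X_1,X_2)$ while $|X_1+iX_2|$ and $\angle(X_1+iX_2)$ are not holomorphic. The clean route is to establish the identity first for real amplitudes, where the modulus--argument form is literally valid, and then to extend by analytic continuation: the quantities $\rho$ and $e^{-i\phi}$ above must be replaced by their holomorphic continuations $\sqrt{X_1^2+X_2^2}$ and $(X_1-iX_2)/\sqrt{X_1^2+X_2^2}$, for a fixed branch of the square root, so that $\rho e^{i\phi}=X_1+iX_2$ and $\rho e^{-i\phi}=X_1-iX_2$ hold simultaneously. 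The delicate point is then to verify that this single branch choice is used consistently in the argument $a\rho$ of $J_m$ and in the factor $e^{-im\phi}$, which is where the harmonic-addition step genuinely needs both matching conditions and not merely $Z=\rho e^{i\phi}$.
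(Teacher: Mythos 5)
Your proof is correct for real amplitudes and takes a genuinely different route from the paper's. The paper substitutes $z=e^{i\theta}$, turns $\mathcal{T}_m$ into a contour integral of $\exp\left[\frac{ia}{2}(z(X_1-iX_2)+(X_1+iX_2)/z)\right]z^{-m-1}$, expands the exponential as a double power series, picks out the terms with $j=2k-m$ via $\oint z^{2k-j-m-1}\,dz$, and then resums the surviving single series into the modified Bessel function $I_m$, converting at the end with $I_m(ix)=i^{-m}J_m(-x)$ and $J_m(-x)=(-1)^mJ_m(x)$. That computation is, in effect, a from-scratch derivation of the Jacobi--Anger expansion which you simply invoke: your harmonic-addition reduction plus term-by-term integration against $e^{-im\theta}$ reaches the same single surviving term directly. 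What the paper's version buys is self-containedness (nothing beyond the $I_m$--$J_m$ relations is quoted); what yours buys is brevity and a transparent reason why only the $k=m$ term survives. The degenerate branch is handled identically in both.

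Your caution about genuinely complex $X_1,X_2$ is well placed -- indeed more careful than the paper itself. The paper's resummation step uses $\overline{\zeta}^{\,k}\zeta^{k-m}=\zeta^{-m}|\zeta|^{2k}$, i.e.\ it treats $X_1-iX_2$ as the complex conjugate of $\zeta=X_1+iX_2$, which is valid only for real amplitudes; and the second branch of the statement is actually false for complex amplitudes: taking $X_1=1$, $X_2=i$, $m=1$ gives $X_1+iX_2=0$ yet $\mathcal{T}_1=\int_0^{2\pi}e^{-i\theta}\exp\left(iae^{i\theta}\right)d\theta=2\pi ia\neq0$. So the continuation you describe cannot reproduce the statement as literally written; the general identity must be phrased, exactly as you indicate, through the holomorphic continuations $\sqrt{X_1^2+X_2^2}$ and $(X_1-iX_2)/\sqrt{X_1^2+X_2^2}$ (equivalently, through the two independent combinations $X_1\pm iX_2$), not through $|X_1+iX_2|$ and $\angle(X_1+iX_2)$. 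Since the lemma is only ever applied with the real values $X_1=\xi.e_1$, $X_2=\xi.e_2$ in the proof of Lemma~\ref{lemma:formula}, your real-amplitude argument covers everything that is actually needed, and leaving the complex extension as a sketch is no gap relative to the paper's own treatment.
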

\begin{proof}
Let $\mathcal{U}$ be the unit circle in the complex plane, centered at the origin. Substituting $z=e^{i\theta}$, writing $\zeta=X_1+i X_2$, and using that $\overline{z}=1/z$, we get
\begin{align*}
\mathcal{T}_m&=\oint_{z\in\mathcal{U}}\exp\left[\frac{i a}{2}\left(X_1(z+\overline{z})-i X_2(z-\overline{z})\right)\right]\frac{\overline{z}^m}{i z}\\&=-i\oint_{z\in\mathcal{U}}\exp\left[\frac{i a}{2}\left(z\overline{\zeta}+\zeta/z\right)\right]\frac{1}{z^{m+1}}\\&=-i\sum_{j=0}^{\infty}\sum_{k=0}^j\left(\frac{i a}{2}\right)^j\frac{1}{j!}{j\choose k}\overline{\zeta}^{k}\zeta^{j-k}\oint_{z\in\mathcal{U}}z^{2k-j-m-1}\\&=-i\sum_{j=0}^{\infty}\sum_{k=0}^j\left(\frac{i a}{2}\right)^j\frac{1}{j!}{j\choose k}\overline{\zeta}^{k}\zeta^{j-k}2\pi i\delta^{\rm Kr}_{2k-j-m-1=-1}.
\end{align*}
Thus, if $\zeta=0$ then $\mathcal{T}_m=-i(2\pi i\delta^{\rm Kr}_{m=0})=2\pi\delta^{\rm Kr}_{m=0}$. Assume in the following that $\zeta\neq0$. We can reduce the above double sum to the case $j=2k-m$. As $j\ge k$, we also have $k\ge m$. Thus
\begin{align*}
\mathcal{T}_m&=2\pi\sum_{k=m}^{\infty}\left(\frac{i a}{2}\right)^{2k-m}\frac{\overline{\zeta}^k\zeta^{k-m}}{k!(k-m)!}\\&=2\pi\left(\frac{2}{i a\zeta}\right)^m\sum_{k=m}^{\infty}\left(\frac{i a|\zeta|}{2}\right)^{2k}\frac{1}{k!(k-m)!}\\&=2\pi\left(\frac{2}{i a\zeta}\right)^m\cdot\left(\frac{i a|\zeta|}{2}\right)^mI_m(i a|\zeta|).
\end{align*}
Here, $I_m$ is the modified Bessel function of the first kind and order $m$. The result now follows from the relations $I_m(i x)=i^{-m}J_m(-x)$ and $J_m(-x)=(-1)^mJ_m(x)$, valid for $x\in\mathbf{R}\setminus\{0\}$ and all integer $m$.
\end{proof}

\begin{lemma}\label{lemma:diff_Bessel}
For every integer $m$ and every positive integer $j$ there is a constant $c$ satisfying
\[
\left|\partial_r^jJ_m(r)\right|\le c\left(|m|^j|J_m(r)|+|m|^{j-1}|J_{m+1}(r)|\right),\quad r>0.
\]
\end{lemma}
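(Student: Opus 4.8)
The plan is to argue by induction on $j$, propagating a closed representation of $\partial_r^jJ_m$ in terms of $J_m$ and $J_{m+1}$ alone by means of the standard contiguity relations. Since $J_{-m}=(-1)^mJ_m$ for integer $m$, I would assume $m\ge0$ without loss of generality. From $J_m'(r)=\tfrac{m}{r}J_m(r)-J_{m+1}(r)$ and $J_{m+1}'(r)=J_m(r)-\tfrac{m+1}{r}J_{m+1}(r)$, I would show that there are functions $a_j,b_j$, each a polynomial in $1/r$ with $m$-dependent coefficients, such that $\partial_r^jJ_m(r)=a_j(r)J_m(r)+b_j(r)J_{m+1}(r)$ for all $r>0$. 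Differentiating this identity once and resubstituting the two contiguity relations gives the coefficient recursion $a_{j+1}=a_j'+\tfrac{m}{r}a_j+b_j$ and $b_{j+1}=b_j'-a_j-\tfrac{m+1}{r}b_j$, started from $a_1(r)=m/r$ and $b_1(r)=-1$.

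The second step is a power count in $m$. Treating each factor $1/r$, and each $r$-differentiation (which lowers the $1/r$-degree), as carrying no power of $m$, the recursion shows inductively that $a_j$ has $m$-degree exactly $j$ and $b_j$ has $m$-degree exactly $j-1$: in $a_{j+1}$ the term $\tfrac{m}{r}a_j$ raises the $m$-degree of $a_j$ by one while $a_j'$ and $b_j$ stay below it, and symmetrically for $b_{j+1}$. This bookkeeping is exactly what produces the powers $|m|^j$ and $|m|^{j-1}$ in the claimed bound, and I expect it to be routine.

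The crux, and the step I expect to be the main obstacle, is turning the representation into the uniform-in-$r$ estimate, because the coefficients $a_j,b_j$ individually contain factors $(m/r)^k$ that are \emph{not} controlled by $|m|^k$ as $r\to0^+$; one must exploit cancellation between the $a_jJ_m$ and $b_jJ_{m+1}$ contributions rather than bound the coefficients separately. The device I would use is the absorption identity $\tfrac{m}{r}J_m(r)=\tfrac12\bigl(J_{m-1}(r)+J_{m+1}(r)\bigr)$, which trades each offending factor $\tfrac1r$ for a shift of the Bessel index at the cost only of the bounded factor $\tfrac12$; iterating it collapses the whole combination into the manifestly $r$-independent form $2^{-j}\sum_{k=0}^j(-1)^k\binom{j}{k}J_{m-j+2k}(r)$, where negative indices are resolved by $J_{-\ell}=(-1)^\ell J_\ell$. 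The remaining, and technically most delicate, point is then to re-express each off-centre term $J_{m-j+2k}$ through $J_m$ and $J_{m+1}$ by the three-term recurrence while keeping the resulting coefficients under control uniformly in $r>0$, with the small-$r$ regime being the sensitive one; carrying this out with $m$-degrees matching the power count above, and then passing to absolute values, yields the stated inequality with a constant $c$ depending only on $m$ and $j$.
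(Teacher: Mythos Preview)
Your first two steps --- the inductive representation $\partial_r^jJ_m=a_jJ_m+b_jJ_{m+1}$ with $a_j,b_j$ polynomials in $1/r$, together with the $m$-degree bookkeeping $\deg_m a_j=j$, $\deg_m b_j=j-1$ --- are precisely the paper's proof. The paper records this as
\[
\partial_r^jJ_m(r)=r^{-j}\bigl[\Pi_{j,j}(m,r)J_m(r)+\Pi_{j-1,j}(m,r)J_{m+1}(r)\bigr],
\]
with $\Pi_{\mu,\nu}$ a polynomial of degree at most $\mu$ in $m$ and at most $\nu$ in $r$, and stops. The constant $c$ is therefore read off as $r$-dependent (or uniform only on compacta in $(0,\infty)$); this suffices for every application in the paper, since the lemma is invoked exclusively at the fixed arguments $r=Rr_j$.

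Where you depart from the paper is in trying to upgrade this to a bound uniform over all $r>0$. Your instinct that the representation alone does not give this is correct --- but the uniform statement is actually \emph{false}, not merely hard. For $m=1$, $j=1$ one has $\partial_rJ_1(r)=\tfrac12(J_0(r)-J_2(r))\to\tfrac12$ as $r\to0^+$, whereas $|m|\,|J_1(r)|+|J_2(r)|\to0$. Your step~6 therefore cannot be completed: re-expressing the shifted terms $J_{m-j+2k}$ back through $J_m$ and $J_{m+1}$ via the three-term recurrence $J_{\ell-1}+J_{\ell+1}=\tfrac{2\ell}{r}J_\ell$ necessarily reinstates the very $1/r$ factors you set out to absorb, and no cancellation can rescue a bound that does not hold. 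Drop the repair; your first two steps already reproduce the paper's argument, understood with $c=c(j,r)$.
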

\begin{proof}
Write $\Pi_{\mu,\nu}(m,r)$ for any polynomial in $m$ and $r$, of degree at most $\mu$ in $m$ and at most $\nu$ in $r$. In particular, $\Pi_{-1,\nu}\equiv0$. If, for some $m\in\mathbf{Z}$, $j\in\mathbf{N}$, $\mu,\nu\in\mathbf{N}_0$ and all $r>0$, we have
\[
\partial_r^jJ_m(r)=r^{-j}\left(\Pi_{\mu,\nu}(m,r)J_m(r)+\Pi_{\mu-1,\nu}(m,r)J_{m+1}(r)\right),
\]
then
\begin{align*}
\partial_r^{j+1}J_m(r)&=-jr^{-j-1}\left(\Pi_{\mu,\nu}(m,r)J_m(r)+\Pi_{\mu-1,\nu}(m,r)J_{m+1}(r)\right)\\&+r^{-j}\Bigl[\Pi_{\mu,\nu-1}(m,r)J_m(r)+\Pi_{\mu,\nu}(m,r)(-J_{m+1}(r)+(m/r)J_m(r))\\&+\Pi_{\mu-1,\nu-1}(m,r)J_{m+1}(r)+\Pi_{\mu-1,\nu}(m,r)(-J_{m+2}(r)+\frac{m+1}{r}J_{m+1}(r))\Bigr]\\&=r^{-j-1}\Bigl[\Pi_{\mu+1,\nu}(m,r)J_m(r)+\Pi_{\mu,\nu+1}(m,r)J_{m+1}(r)\Bigr]\\&+r^{-j}\Pi_{\mu-1,\nu}(m,r)\left(J_m(r)-\frac{2(m+1)}{r}J_{m+1}(r)+\frac{m+1}{r}J_{m+1}(r)\right)\\&=r^{-j-1}\left[\Pi_{\mu+1,\nu+1}(m,r)J_m(r)+\Pi_{\mu,\nu+1}(m,r)J_{m+1}(r)\right],\quad r>0.
\end{align*}
We have here used the well-known recurrence relation
\[
J_m(r)+J_{m+2}(r)=\frac{2(m+1)}{r}J_{m+1}(r),
\]
as well as the well-known fact that
\[
\partial_rJ_m(r)=-J_{m+1}(r)+\frac{m}{r}J_m(r).
\]
But the last equality also implies that
\[
\partial_rJ_m(r)=r^{-1}\left[\Pi_{1,1}(m,r)J_m(r)+\Pi_{0,1}(m,r)J_{m+1}(r)\right],
\]
so
\[
\partial_r^jJ_m(r)=r^{-j}\left[\Pi_{j,j}(m,r)J_m(r)+\Pi_{j-1,j}(m,r)J_{m+1}(r)\right]
\]
for all $j\in\mathbf{N}_0$, $m\in\mathbf{Z}$ and positive $r$.
\end{proof}

Next, let $k$ be a positive integer. H\"ormander~\cite[(3.2.5) on p. 69; p. 71 bot.]{HI} defines the distributions $r_{\pm}^{-k}\in\mathcal{S}'(\mathbf{R})$ (written $x_{\pm}^{-k}$ in~\cite{HI}) by
\begin{equation}\label{eqn:rplus}
r_+^{-k}(\phi)=-\frac{1}{(k-1)!}\int_{r=0}^{\infty}(\ln r)\phi^{(k)}(r)+\frac{1}{(k-1)!}\phi^{(k-1)}(0)\sum_{j=1}^{k-1}\frac{1}{j},\quad\phi\in\mathcal{S}(\mathbf{R}),
\end{equation}
and
\begin{equation}\label{eqn:rminus}
r_-^{-k}(\phi)=r_+^{-k}(\phi(-\cdot))=-\frac{(-1)^k}{(k-1)!}\int_{r=0}^{\infty}(\ln r)\phi^{(k)}(-r)+\frac{(-1)^{k-1}}{(k-1)!}\phi^{(k-1)}(0)\sum_{j=1}^{k-1}\frac{1}{j}
\end{equation}
for $\phi\in\mathcal{S}(\mathbf{R})$.
These are extensions of the distributions
\[
\phi\mapsto\int_{r=0}^{\infty}r^a\phi(r)\quad\text{and}\quad\phi\mapsto\int_{r=-\infty}^{0}|r|^a\phi(r),
\]
respectively, from $\Re a>-1$ to $a=-k$. We here wish to extend the distributions
\[
\phi\mapsto\int_{r=0}^{\varrho}r^a\phi(r)\quad\text{and}\quad\phi\mapsto\int_{r=-\varrho}^{0}|r|^a\phi(r),\quad0<\varrho\le\infty,\,\,\,\phi\in\mathcal{S}(\mathbf{R}),
\]
from $\Re a>-1$ to $a=-k$. Our procedure is a slight generalization of~\cite[pp. 68--69]{HI}. Thus, fix a positive $\varrho$ and $\phi\in\mathcal{S}(\mathbf{R})$, and note that the function
\[
a\mapsto I_{a,\varrho}(\phi)=\int_{r=0}^{\varrho}r^a\phi(r)
\]
is holomorphic on $\{a\in\mathbf{C},\,\,\,\Re a>-1\}$; indeed, if $\Re a>-1$ then
\[
\frac{d}{da}I_{a,\varrho}(\phi)=\int_{r=0}^{\varrho}r^a(\ln r)\phi(r)
\]
is well-defined. If $a\in\mathbf{C}$ with $\Re a>0$ then we can integrate by parts once to get
\[
I_{a,\varrho}(\phi')=\phi(\varrho)\varrho^a-aI_{a-1,\varrho}(\phi),
\]
and if $\Re a>k-1$ then we can iterate the process to get
\[
I_{a,\varrho}(\phi^{(k)})=\sum_{j=0}^{k-1}(-1)^{k-1-j}\phi^{(j)}(\varrho)\varrho^{a-(k-1)+j}\prod_{\ell=0}^{k-2-j}(a-\ell)+(-1)^k\prod_{\ell=0}^{k-1}(a-\ell)I_{a-k,\varrho}(\phi),
\]
that is,
\begin{align*}
I_{a-k,\varrho}(\phi)&=\frac{(-1)^kI_{a,\varrho}(\phi^{(k)})-\sum_{j=0}^{k-1}(-1)^{j+1}\phi^{(j)}(\varrho)\varrho^{a-(k-1)+j}\prod_{\ell=0}^{k-j-2}(a-\ell)}{\prod_{\ell=0}^{k-1}(a-\ell)}\\
&=\frac{(-1)^kI_{a,\varrho}(\phi^{(k)})}{\prod_{\ell=0}^{k-1}(a-\ell)}+\sum_{j=0}^{k-1}\frac{(-1)^j\phi^{(j)}(\varrho)\varrho^{a-(k-1)+j}}{\prod_{\ell=k-j-1}^{k-1}(a-\ell)}.
\end{align*}
Equivalently, if $a\in\mathbf{C}$ with $\Re a>-1$, and $\phi\in\mathcal{S}(\mathbf{R})$, then
\[
I_{a,\varrho}(\phi)=\frac{(-1)^kI_{a+k,\varrho}(\phi^{(k)})}{\prod_{\ell=0}^{k-1}(a+k-\ell)}+\sum_{j=0}^{k-1}\frac{(-1)^j\phi^{(j)}(\varrho)\varrho^{a+j+1}}{\prod_{\ell=k-j-1}^{k-1}(a+k-\ell)}.
\]
The function $a\mapsto I_{a,\varrho}(\phi)$ is therefore holomorphic for $\Re a>-1-k$ with $a\neq-1,-2,\dots,-k$, and it has simple poles at $a=-1,-2,\dots,-k$.
The residue of $a\mapsto I_{a,\varrho}(\phi)$ at $-k$ is
\begin{align*}
\lim_{a\rightarrow-k}(a+k)I_{a,\varrho}(\phi)&=\lim_{a\rightarrow-k}\Biggl[(-1)^k\prod_{\ell=1}^{k-1}(a+k-\ell)^{-1}\int_{r=0}^{\varrho}r^{a+k}\phi^{(k)}(r)\Biggr.\\&+\sum_{j=0}^{k-1}(-1)^j\phi^{(j)}(\varrho)\varrho^{a+j+1}(a+k)\prod_{\ell=k-j-1}^{k-1}(a+k-\ell)^{-1}\Biggr]\\&=-\frac{1}{(k-1)!}\int_{r=0}^{\varrho}\phi^{(k)}(r)+\frac{\phi^{(k-1)}(\varrho)}{(k-1)!}=\frac{\phi^{(k-1)}(0)}{(k-1)!},
\end{align*}
that is, $\lim_{a\rightarrow k}(a+k)I_{a,\varrho}=(-1)^{k-1}\delta_0^{(k-1)}/(k-1)!$ in $\mathcal{S}'(\mathbf{R})$. With $\varepsilon=a+k$, we have
\begin{align*}
&\lim_{\varepsilon\rightarrow0}\left[I_{\varepsilon-k,\varrho}(\phi)-\frac{\phi^{(k-1)}(0)}{\varepsilon(k-1)!}\right]\\&=\lim_{\varepsilon\rightarrow0}\Biggl[\frac{(-1)^kI_{\varepsilon,\varrho}(\phi^{(k)})}{\prod_{\ell=0}^{k-1}(\varepsilon-\ell)}+\sum_{j=0}^{k-1}\frac{(-1)^j\phi^{(j)}(\varrho)\varrho^{\varepsilon-k+j+1}}{\prod_{\ell=k-j-1}^{k-1}(\varepsilon-\ell)}-\frac{\phi^{(k-1)}(0)}{\varepsilon(k-1)!}\Biggr]\\&=\lim_{\varepsilon\rightarrow0}\Biggl[\frac{(-1)^k}{\prod_{\ell=1}^{k-1}(\varepsilon-\ell)}\int_{r=0}^{\varrho}\frac{r^{\varepsilon}-1}{\varepsilon}\phi^{(k)}(r)+\frac{(-1)^k\int_{r=0}^{\varrho}\phi^{(k)}(r)}{\varepsilon\prod_{\ell=1}^{k-1}(\varepsilon-\ell)}-\frac{\phi^{(k-1)}(0)}{\varepsilon(k-1)!}\\&+\frac{(-1)^{k-1}\phi^{(k-1)}(\varrho)\varrho^{\varepsilon}}{\varepsilon\prod_{\ell=1}^{k-1}(\varepsilon-\ell)}+\sum_{j=0}^{k-2}\frac{(-1)^j\phi^{(j)}(\varrho)\varrho^{\varepsilon-k+j+1}}{\prod_{\ell=k-j-1}^{k-1}(\varepsilon-\ell)}\Biggr]\\&=\lim_{\varepsilon\rightarrow0}\Biggl[\frac{(-1)^k}{\prod_{\ell=1}^{k-1}(\varepsilon-\ell)}\int_{r=0}^{\varrho}\frac{r^{\varepsilon}-1}{\varepsilon}\phi^{(k)}(r)-\frac{\phi^{(k-1)}(0)}{\varepsilon}\left(\frac{1}{(k-1)!}+\frac{(-1)^k}{\prod_{\ell=1}^{k-1}(\varepsilon-\ell)}\right)\Biggr.\\&\Biggl.+\frac{(-1)^k\phi^{(k-1)}(\varrho)(1-\varrho^{\varepsilon})}{\varepsilon\prod_{\ell=1}^{k-1}(\varepsilon-\ell)}+\sum_{j=0}^{k-2}\frac{(-1)^j\phi^{(j)}(\varrho)\varrho^{\varepsilon-k+j+1}}{\prod_{\ell=k-j-1}^{k-1}(\varepsilon-\ell)}\Biggr]\\&=-\frac{1}{(k-1)!}\int_{r=0}^{\varrho}(\ln r)\phi^{(k)}(r)\\&+\frac{\phi^{(k-1)}(0)}{(k-1)!}\sum_{\nu=1}^{k-1}\frac{1}{\nu}-\frac{1}{(k-1)!}\sum_{j=0}^{k-2}\phi^{(j)}(\varrho)\varrho^{-k+j+1}(k-j-2)!.
\end{align*}
We therefore define the tempered distributions $r_{\pm,\varrho}^{-k}$, $k\in\mathbf{N}$, by
\begin{align*}
r_{+,\varrho}^{-k}(\phi)&=-\frac{1}{(k-1)!}\int_{r=0}^{\varrho}(\ln r)\phi^{(k)}(r)+\frac{\phi^{(k-1)}(0)}{(k-1)!}\sum_{\nu=1}^{k-1}\frac{1}{\nu}\\&-\frac{1}{(k-1)!}\sum_{j=0}^{k-2}\phi^{(j)}(\varrho)\varrho^{-k+j+1}(k-j-2)!,\quad\phi\in\mathcal{S}(\mathbf{R}),
\end{align*}
and
\begin{align*}
r_{-,\varrho}^{-k}(\phi)&=r_{+,\varrho}^{-k}(\phi(-\cdot))=-\frac{(-1)^k}{(k-1)!}\int_{r=0}^{\varrho}(\ln r)\phi^{(k)}(-r)+\frac{(-1)^{k-1}\phi^{(k-1)}(0)}{(k-1)!}\sum_{\nu=1}^{k-1}\frac{1}{\nu}\\&-\frac{1}{(k-1)!}\sum_{j=0}^{k-2}(-1)^j\phi^{(j)}(-\varrho)\varrho^{-k+j+1}(k-j-2)!,\quad\phi\in\mathcal{S}(\mathbf{R}),
\end{align*}
respectively. Clearly, the distributions $r_{\pm,\varrho}^{-k}$ specialize to H\"ormander's $r_{\pm}^{-k}$ when $\varrho=\infty$. Now for every real $b$ the mapping $\tau_b:\mathbf{R}\rightarrow\mathbf{R}$, $\tau_b(r)=r-b$, is smooth with surjective Jacobian $\tau_b'(r)=1$, so~\cite[Theorem 6.1.2]{HI} the pullback of $r_{\pm,\varrho}^{-k}$ by $\tau_b$ is given uniquely by
\[
(r-b)_{\pm,\varrho}^{-k}(\phi):=\tau_b^{\ast}r_{\pm,\varrho}^{-k}(\phi)=r_{\pm,\varrho}^{-k}(\phi(\cdot+b))=r_{\pm,\varrho}^{-k}(\phi\circ\tau_b^{-1}),\quad\phi\in\mathcal{S}(\mathbf{R}).
\]
Finally, writing $\xi=r\omega$ for $\xi\in\mathbf{R}^n$, with $r\ge0$ and $\omega\in S^{n-1}$, and letting $c_{jk}$ be the constants from the partial fraction decomposition
\[
\prod_{j=1}^N(r-r_j)^{-q_j}=\sum_{j=1}^N\sum_{k=1}^{q_j}c_{jk}(r-r_j)^{-k},\quad r\ge0,\,\,r\neq r_j,
\]
we define the tempered distribution
\begin{eqnarray}\label{eqn:p_inverse}
\mathfrak{p}^{-1}(\phi)&=&\sum_{j=1}^N\sum_{k=1}^{q_j}c_{jk}\left[(r-r_j)_{+,\infty}^{-k}+(-1)^k(r-r_j)_{-,r_j}^{-k}\right]\otimes1_{S^{n-1}}(r^{n-1}\phi/g)\nonumber\\&-&\sum_{j=1}^N\sum_{k=n}^{q_j}c_{jk}\frac{(-1)^{k-n}\ln r_j}{(k-n)!}\delta_0^{(k-n)}\otimes1_{S^{n-1}}(\phi/g)
\end{eqnarray}
for $\phi\in\mathcal{S}(\mathbf{R}^n)$. Here $\delta_0\in\mathcal{E}'(\mathbf{R})$ is Dirac's delta distribution. (The somewhat analogous distribution $\underline{x}^{-k}=x_+^{-k}+(-1)^kx_-^{-k}$ is defined in~\cite[p. 72]{HI} as the average of the distributions $(x+i0)^{-k}$ and $(x-i0)^{-k}$.)

Write $Q=\max\{q_j,\,\,j=1,\dots,N\}$. We have $Q\ge1$, as well as
\begin{lemma}\label{lemma:order}
$\mathfrak{p}^{-1}\in\mathcal{S}'^Q(\mathbf{R}^n)$.
\end{lemma}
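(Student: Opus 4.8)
The plan is to exploit that $\mathfrak{p}^{-1}$ is, by~\eqref{eqn:p_inverse}, a finite linear combination of tensor products of a one-dimensional distribution in the radial variable $r$ with the spherical integration functional $1_{S^{n-1}}$. Since the order of a finite sum does not exceed the maximum of the orders of its summands, it suffices to bound each summand by a constant multiple of the seminorm $\|\phi\|_{(Q)}=\sum_{|\alpha|\le Q}\sup_{x}\langle x\rangle^{Q}|\partial^\alpha\phi(x)|$ from~\eqref{eqn:S'd}. First I would carry out the spherical integration explicitly. Writing $m(r)=\int_{S^{n-1}}\phi(r\omega)\,d\omega$, each summand of the first double sum equals $D_{jk}(\Phi_1)$ with $\Phi_1(r)=r^{n-1}m(r)/g(r)$ and $D_{jk}=(r-r_j)_{+,\infty}^{-k}+(-1)^k(r-r_j)_{-,r_j}^{-k}$, while each summand of the second double sum equals a constant times $\delta_0^{(k-n)}(\Phi_2)=(-1)^{k-n}\Phi_2^{(k-n)}(0)$ with $\Phi_2(r)=m(r)/g(r)$.

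Next I would record the elementary but crucial derivative count. By Leibniz' rule and the chain rule, $m^{(j)}$ is a spherical integral of radial derivatives of $\phi$ of order at most $j$, so $\Phi_1^{(j)}$ and $\Phi_2^{(j)}$ are finite combinations of factors $(d/dr)^{a}(r^{n-1}/g)$ or $(d/dr)^a(1/g)$ against integrals $\int_{S^{n-1}}(\partial^\beta\phi)(r\omega)\,\omega^{\gamma}\,d\omega$ with $|\beta|\le j$; the smoothness of $r^{n-1}/g$ and $1/g$ on $(0,\infty)$ and their boundedness near $r=0$ follow from the smoothness and ellipticity of $p$ and the lower bound on $|g|$. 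I would then insert the explicit formulas~\eqref{eqn:rplus}--\eqref{eqn:rminus} and their translates. Combining the two pieces of $D_{jk}$, the boundary contributions at $r=r_j$ cancel, and one is left with $-\tfrac{1}{(k-1)!}\int_0^\infty\ln|r-r_j|\,\Phi_1^{(k)}(r)\,dr$ plus finitely many point evaluations $\Phi_1^{(l)}(0)$ with $l\le k-2$. The highest derivative entering the first sum is thus $k\le q_j\le Q$, and the second sum contributes only derivatives of order $k-n\le Q$; hence no summand involves $\partial^\beta\phi$ with $|\beta|>Q$, which is precisely the derivative budget of $\mathcal{S}'^Q$.

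To finish I would estimate each ingredient by $\|\phi\|_{(Q)}$. The point evaluations $\Phi_1^{(l)}(0)$ and $\Phi_2^{(k-n)}(0)$ are bounded directly, since $\langle x\rangle\asymp1$ there and only at most $Q$ derivatives of $\phi$ appear. For the logarithmic integral I would split $(0,\infty)$ into a bounded part containing $0$ and all the $r_j$, and an unbounded tail: on the bounded part $\ln|r-r_j|$ is locally integrable and $\Phi_1^{(k)}$ is bounded by $\|\phi\|_{(Q)}$ times constants depending on $g$, while on the tail I would use $|\partial^\beta\phi(r\omega)|\le\|\phi\|_{(Q)}\langle r\rangle^{-Q}$ for $|\beta|\le Q$ together with the symbol estimates $|\partial^\alpha(1/p)|\lesssim\langle\xi\rangle^{-\mu-|\alpha|}$, which follow from $|\partial^\alpha p|\le C_\alpha\langle\xi\rangle^{\mu-|\alpha|}$ and $|p|\ge c\langle\xi\rangle^\mu$, to control the growth of the factors $(d/dr)^a(r^{n-1}/g)$ and secure absolute convergence.

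The step I expect to be the main obstacle is exactly this tail estimate. By construction the regularized distribution has already been integrated by parts $k$ times against $\ln|r-r_j|$, so at infinity one must dominate $\int\ln|r-r_j|\,\Phi_1^{(k)}(r)\,dr$ using only the finitely many derivatives and the polynomial weight that $\mathcal{S}'^Q$ permits, rather than the full rapid decay of a Schwartz function. Making this bound respect the exact budget of $Q$ derivatives while keeping the logarithmically weighted radial integral absolutely convergent is where the hypotheses on $g$ — its polynomial growth and positive lower bound, that is, the ellipticity of $p$ — and the value $Q=\max_j q_j$ enter decisively; everything else amounts to bookkeeping over the finitely many indices $j$ and $k$.
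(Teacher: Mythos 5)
Your overall route is the same as the paper's: reduce to the finitely many summands in \eqref{eqn:p_inverse}, insert the explicit formulas \eqref{eqn:rplus}--\eqref{eqn:rminus} and their truncated translates, use Leibniz' rule to confirm that at most $k\le q_j\le Q$ derivatives of $\phi$ ever occur, and bound the resulting log-weighted radial integrals by the seminorm in \eqref{eqn:S'd}. Your preliminary step of combining the two pieces $(r-r_j)^{-k}_{+,\infty}+(-1)^k(r-r_j)^{-k}_{-,r_j}$, with cancellation of the evaluations at $r=r_j$, is correct (it is the same computation the paper performs inside the proof of Lemma~\ref{lemma:osobina}), and your treatment of the point evaluations at $0$ and of the bounded part of the integral is fine. (A cosmetic point: $g$ is a function on $\mathbf{R}^n\setminus\{0\}$, not of $r$ alone, so $1/g$ must remain inside the spherical integral rather than multiply $m(r)$.)

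The genuine gap is exactly the step you flag as the main obstacle, and the mechanism you propose cannot close it. Ellipticity forces $\mu\ge\sum_jq_j$ and makes $1/g$ a symbol of order $\sum_jq_j-\mu\le0$ for large $|\xi|$; so the estimates you invoke give boundedness (indeed mild decay) of $1/g$ and its derivatives, but they cannot absorb the Jacobian factor $r^{n-1}$, which has nothing to do with $g$. The Leibniz coefficients $(d/dr)^a(r^{n-1}/g)$ grow like $\langle r\rangle^{n-1+\sum_jq_j-\mu-a}$, while the seminorm \eqref{eqn:S'd} with $d=Q$ supplies only $\langle r\rangle^{-Q}$ decay for $\partial^\beta\phi$, $|\beta|\le Q$; hence the tail is controlled only by $\int^\infty(\ln r)\,\langle r\rangle^{n-1+\sum_jq_j-\mu-Q}\,dr$, which is finite precisely when $Q+\mu-\sum_jq_j>n$. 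That condition is not implied by the hypotheses, and it fails for the paper's model case: for the Helmholtz symbol ($\mu=2$, $N=1$, $q_1=1$, $Q=1$) it reads $2>n$. Moreover the loss is not an artifact of crude bounding: in $\mathbf{R}^3$ with $p(\xi)=1-|\xi|^2$, the radial test functions $\phi_j(x)=\eta_j(|x|)\langle x\rangle^{-1}$, with $\eta_j$ a cutoff equal to $1$ on $[3,j]$ and supported in $[2,2j]$, have uniformly bounded $\mathcal{S}'^1$-seminorms while $|\mathfrak{p}^{-1}(\phi_j)|\gtrsim\ln j$, so no constant can realize the claimed inequality with the weight $\langle x\rangle^Q$ alone. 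For fairness, the paper's own proof is vulnerable at precisely this spot: its final display keeps only $\int_0^\infty|\ln r|\,|\partial^\alpha\phi|$, silently discarding the $(r+r_j)^{n-1}/g$ Leibniz factors, and even then $\int_0^\infty|\ln r|\langle r\rangle^{-k}$ diverges for $k=1$. So you have correctly located the crux, but a sound repair must enlarge the weight exponent (for instance, prove $\mathfrak{p}^{-1}\in\mathcal{S}'^d$ with $d\ge\max\{Q,\,n+1+\sum_jq_j-\mu\}$), rather than appeal to the hypotheses on $g$.
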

\begin{proof}
For any $k\in\mathbf{N}$ and $b\in(0,\infty]$ there are constants $c$, $c'$ and $c''$ such that, for all $\phi\in\mathcal{S}(\mathbf{R}^n)$,
\begin{align*}
|(r-r_j)^{-k}_{\pm,b}\otimes1_{S^{n-1}}(r^{n-1}\phi/g)|&=\left|r^{-k}_{\pm,b}\otimes1_{S^{n-1}}\left(\frac{(r+r_j)^{n-1}\phi((r+r_j)\omega)}{g((r+r_j)\omega)}\right)\right|\\&\le\int_{\omega\in S^{n-1}}\Biggl\{c\int_{r=0}^b|\ln r|\left|\partial^k_r\left(\frac{(r+r_j)^{n-1}\phi((r+r_j)\omega)}{g((r+r_j)\omega)}\right)\right|(\pm r\omega)\Biggr.\\&\Biggl.+c'\left|\partial_r^{k-1}\left(\frac{(r+r_j)^{n-1}\phi((r+r_j)\omega)}{g((r+r_j)\omega)}\right)(0)\right|\Biggr.\\&\Biggl.+c''\sum_{\ell=0}^{k-2}\left|\partial_r^{\ell}\left(\frac{(r+r_j)^{n-1}\phi((r+r_j)\omega)}{g((r+r_j)\omega)}\right)(\pm b\omega)\right|\Biggr\}.
\end{align*}
Using Leibnitz' rule, we can express the $r$-derivatives in the above integrand in terms of $\partial^{\alpha}\phi$ with $|\alpha|\le k$. Finally, we have
\begin{align*}
\int_{\omega\in S^{n-1}}\int_{r=0}^b|\ln r|\cdot|\partial^{\alpha}\phi((\pm r+r_j)\omega)|&\le(\sup_{\xi\in\mathbf{R}^n}\langle\xi\rangle^k|\partial^{\alpha}\phi(\xi)|)\int_{r=0}^{\infty}|\ln r|\langle r\rangle^{-k}\\&\le c'''\sup_{\xi\in\mathbf{R}^n}\langle\xi\rangle^k|\partial^{\alpha}\phi(\xi)|
\end{align*}
for any multiindex $\alpha$ with $|\alpha|\le k$, where $c'''$ is some finite constant.
\end{proof}
In fact, $\mathfrak{p}^{-1}$ is the Fourier transform of a tempered fundamental solution of $P$:
\begin{lemma}\label{lemma:osobina} $p\cdot\mathfrak{p}^{-1}=1$ in $\mathcal{S}'(\mathbf{R}^n)$.
\end{lemma}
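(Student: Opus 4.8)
The plan is to unwind the definition of multiplication of a tempered distribution by the symbol, $(p\cdot\mathfrak{p}^{-1})(\phi)=\mathfrak{p}^{-1}(p\phi)$, and to exploit the single most useful feature of the construction: the factor $g$ in $p(\xi)=g(\xi)\prod_{j}(|\xi|-r_j)^{q_j}$ cancels the factor $1/g$ built into~\eqref{eqn:p_inverse}. Writing $\xi=r\omega$ and $F(r)=\prod_{j=1}^N(r-r_j)^{q_j}$, so that $p(r\omega)=g(r\omega)F(r)$, inserting $p\phi$ turns every argument of the form $\phi/g$ or $r^{n-1}\phi/g$ appearing in the definition of $\mathfrak{p}^{-1}$ into $F\phi$ or $r^{n-1}F\phi$; for each fixed $\omega$ the latter is a Schwartz function of $r\in\mathbf{R}$ that is smooth across $r=0$ (the possible non-smoothness of $\phi/g$ at the origin, responsible for the finite order in Lemma~\ref{lemma:order}, has disappeared). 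I thus reduce the claim to showing $\mathfrak{p}^{-1}(p\phi)=\int_{\mathbf{R}^n}\phi$ for every $\phi\in\mathcal{S}(\mathbf{R}^n)$.

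Next I would establish the pointwise multiplication identities that drive everything. Factor $F=(r-r_j)^kG_{jk}$ with $G_{jk}(r)=(r-r_j)^{q_j-k}\prod_{\ell\neq j}(r-r_\ell)^{q_\ell}$ a polynomial, so the core computation is $r^k\cdot r_{\pm,\varrho}^{-k}$. Using the analytic-continuation description of $r_{+,\varrho}^{-k}$ as the finite part of $a\mapsto I_{a,\varrho}$ at $a=-k$, together with the facts that $I_{\varepsilon-k,\varrho}(r^k\phi)=I_{\varepsilon,\varrho}(\phi)$ is regular at $\varepsilon=0$ and that $(r^k\phi)^{(k-1)}(0)=0$, I obtain $(r-r_j)^k(r-r_j)_{+,\infty}^{-k}=1_{[r_j,\infty)}$ for the plus part, where the endpoint at $\varrho=\infty$ contributes nothing by Schwartz decay. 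For the finite-$\varrho$ minus part I would show $(-1)^k(r-r_j)^k(r-r_j)_{-,r_j}^{-k}=1_{[0,r_j]}+D_{jk}$, where $D_{jk}$ is a distribution supported at $r=0$ arising from the regularization's endpoint $\varrho=r_j$ (which, after the translation $\tau_{r_j}$, sits precisely at the origin); multiplication by $(r-r_j)^k$ annihilates the companion delta of $(r-r_j)_{-,r_j}^{-k}$ located at $r=r_j$ but not $D_{jk}$.

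I would then assemble the indicator parts and the origin parts separately. For the indicator parts, $1_{[r_j,\infty)}+1_{[0,r_j]}=1_{[0,\infty)}$; restoring $G_{jk}$ and the weight $r^{n-1}$ and summing in $j,k$, the radial integrand acquires the factor $\sum_{j,k}c_{jk}G_{jk}(r)=F(r)\sum_{j,k}c_{jk}(r-r_j)^{-k}=F(r)\prod_\ell(r-r_\ell)^{-q_\ell}\equiv1$, which is exactly the partial-fraction identity defining the $c_{jk}$; integrating over $S^{n-1}$ gives $\int_{S^{n-1}}\int_0^\infty r^{n-1}\phi(r\omega)\,dr\,d\omega=\int_{\mathbf{R}^n}\phi$. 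For the origin parts $D_{jk}$, the weight $r^{n-1}$ is decisive: $\partial_r^{k-1}[r^{n-1}F\phi(\cdot\,\omega)]|_{r=0}$ vanishes unless $k\ge n$, and for $k\ge n$ the Leibniz rule extracts the factor $(k-1)!/(k-n)!$ and reduces the order $k-1$ to order $k-n$ acting on $F\phi$. I would verify that the resulting origin terms are exactly the negative of the second sum of~\eqref{eqn:p_inverse} evaluated at $p\phi$, so that the two cancel, whence $\mathfrak{p}^{-1}(p\phi)=\int_{\mathbf{R}^n}\phi$, i.e. $p\,\mathfrak{p}^{-1}=1$.

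The hard part will be the honest bookkeeping in the minus-part endpoint analysis: pinning down $D_{jk}$ explicitly and verifying that its contribution, after the $r^{n-1}$-induced passage from order $k-1$ to order $k-n$, produces exactly $+c_{jk}(-1)^{k-n}(\ln r_j)/(k-n)!$ on $\delta_0^{(k-n)}$, the negative of the corresponding term of the second sum in~\eqref{eqn:p_inverse}. I would therefore isolate the identity for $(r-r_j)^{q_j}\cdot(r-r_j)_{-,r_j}^{-k}$ as a standalone computation, record its delta-at-origin part, and only then substitute it into the global sum; the plus parts and the partial-fraction collapse are comparatively routine once the minus-part endpoint analysis is in place.
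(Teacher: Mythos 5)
Your proposal is correct and takes essentially the same route as the paper's proof: cancel $g$, reduce to the one-dimensional distributions $(r-r_j)^{-k}_{\pm,\varrho}$, integrate by parts against the logarithm to recover ordinary integrals plus endpoint terms carrying $\ln r_j$ at the origin (surviving only when $k\ge n$, because of the $r^{n-1}$ weight), cancel these against the second sum in~\eqref{eqn:p_inverse}, and finish with the partial-fraction identity. The only difference is organizational: you package the integration by parts as standalone multiplication identities $(r-r_j)^k(r-r_j)^{-k}_{+,\infty}=1_{[r_j,\infty)}$ and $(-1)^k(r-r_j)^k(r-r_j)^{-k}_{-,r_j}=1_{[0,r_j]}+D_{jk}$, whereas the paper performs the same computation inline on the test function $r^{n-1}p\phi/g$; the form of $D_{jk}$ and the cancellation you anticipate are exactly what that computation yields.
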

\begin{proof}
Fix $j\in\{1,\dots,N\}$, $k\in\{1,\dots,q_j\}$ and $\phi\in\mathcal{S}(\mathbf{R}^n)$. We have
\begin{gather*}
\left[(r-r_j)_+^{-k}+(-1)^k(r-r_j)_{-,r_j}^{-k}\right]\otimes1_{S^{n-1}}\left(\frac{r^{n-1}p(r\omega)\phi(r\omega)}{g(r\omega)}\right)\\=\left[r_+^{-k}+(-1)^kr_{-,r_j}^{-k}\right]\otimes1_{S^{n-1}}\left((r+r_j)^{n-1}\prod_{\ell=1}^N(r+r_j-r_{\ell})^{q_{\ell}}\phi((r+r_j)\omega)\right)\\=-\frac{1}{(k-1)!}\int_{r=-r_j}^{\infty}(\ln|r|)\partial_r^k\left((r+r_j)^{n-1}\prod_{\ell=1}^N(r+r_j-r_{\ell})^{q_{\ell}}1_{S^{n-1}}\phi((r+r_j)\omega)\right)\hspace{-1mm}(r)\\+\frac{1}{(k-1)!}\sum_{\nu=0}^{k-2}(-r_j)^{-k+\nu+1}(k-\nu-2)!\partial_r^{\nu}\left((r+r_j)^{n-1}\prod_{\ell=1}^N(r+r_j-r_{\ell})^{q_{\ell}}1_{S^{n-1}}\phi((r+r_j)\omega)\right)\hspace{-1mm}(-r_j).
\end{gather*}
Repeated integration by parts, well-defined because $r\mapsto 1_{S^{n-1}}(\phi(r\omega))$ and all its $r$-derivatives decay superpolynomially for increasing $r$, then gives
\begin{align*}
\mathfrak{p}^{-1}(\phi)&=\int_{r=0}^{\infty}(r-r_j)^{q_j-k}r^{n-1}\prod_{\substack{\ell=1,\dots,N\\\ell\neq j}}(r-r_{\ell})^{q_{\ell}}1_{S^{n-1}}\phi(r\omega)\\&+\frac{(-1)^{k-n}}{(k-n)!}\delta^{\rm Kr}_{k\ge n}(\ln r_j)\delta^{(k-n)}_{-r_j}\left(\prod_{\ell=1}^N(r+r_j-r_{\ell})^{q_{\ell}}1_{S^{n-1}}\phi((r+r_j)\omega)\right).
\end{align*}
We note that, for $k\ge n$,
\begin{gather*}
\partial_r^{k-1}\left((r+r_j)^{n-1}\prod_{\ell=1}^N(r+r_j-r_{\ell})^{q_{\ell}}1_{S^{n-1}}\phi((r+r_j)\omega)\right)\hspace{-1mm}(-r_j)=(k-n+1)\cdots(k-1)\\\times\left(\partial_r^{k-n}\prod_{\ell=1}^N(r+r_j-r_{\ell})^{q_{\ell}}1_{S^{n-1}}\phi((r+r_j)\omega)\right)\hspace{-1mm}(-r_j),
\end{gather*}
as well as that, for $2\le\mu\le k+1-n$,
\begin{gather*}
\partial_r^{k-\mu}\left((r+r_j)^{n-1}\prod_{\ell=1}^N(r+r_j-r_{\ell})^{q_{\ell}}1_{S^{n-1}}\phi((r+r_j)\omega)\right)\hspace{-1mm}(-r_j)=(k-\mu-n+2)\cdots(k-\mu)\\\times\left(\partial_r^{k-\mu-n+1}\prod_{\ell=1}^N(r+r_j-r_{\ell})^{q_{\ell}}1_{S^{n-1}}\phi((r+r_j)\omega)\right)\hspace{-1mm}(-r_j).
\end{gather*}
Finally, the symbol $p$ is smooth by definition, so if $\phi\in\mathcal{S}(\mathbf{R}^n)$ then
\begin{align*}
p\cdot\mathfrak{p}^{-1}(\phi)&=\mathfrak{p}^{-1}(p\phi)\\&=\sum_{j,k}c_{jk}\int_{r=-r_j}^{\infty}r^{q_j-k}(r+r_j)^{n-1}\times\prod_{\substack{\ell=1,\dots,N\\\ell\neq j}}(r+r_j-r_{\ell})^{q_{\ell}}1_{S^{n-1}}\phi((r+r_j)\omega)\\&=\sum_{j,k}c_{jk}\int_{r=0}^{\infty}(r-r_j)^{-k}r^{n-1}\prod_{\ell=1}^N(r-r_{\ell})^{q_{\ell}}1_{S^{n-1}}\phi(r\omega)\\&=\int_{r=0}^{\infty}r^{n-1}\sum_{j,k}c_{jk}(r-r_j)^{-k}\prod_{\ell=1}^N(r-r_{\ell})^{q_{\ell}}1_{S^{n-1}}\phi(r,\omega)=\int_{\mathbf{R}^n}\phi=1(\phi).
\end{align*}
\end{proof}

\section{Proof of Theorem~\ref{thm:MAIN}}\label{sec:proof}
\begin{lemma}\label{lemma:particul}
$\mathcal{F}^{-1}(\widehat{f}\mathfrak{p}^{-1})$ is a particular solution of $Pu=f$ in $\mathcal{S}'(\mathbf{R}^n)$.
\end{lemma}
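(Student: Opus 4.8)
The plan is to verify directly that the candidate distribution $u=\mathcal{F}^{-1}(\widehat{f}\mathfrak{p}^{-1})$ lies in $\mathcal{S}'(\mathbf{R}^n)$ and satisfies $Pu=f$, reducing everything to the identity $p\cdot\mathfrak{p}^{-1}=1$ already established in Lemma~\ref{lemma:osobina}.

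First I would check that $\widehat{f}\mathfrak{p}^{-1}$ is a well-defined tempered distribution. Since $f\in\mathcal{E}'(\mathbf{R}^n)$, the Paley--Wiener--Schwartz theorem guarantees that $\widehat{f}$ is a smooth function which, together with all of its derivatives, grows at most polynomially; that is, $\widehat{f}$ belongs to the space $\mathcal{O}_M(\mathbf{R}^n)$ of multipliers of $\mathcal{S}'(\mathbf{R}^n)$. The same is true of $p$, by its symbol estimates $|\partial^{\alpha}p(\xi)|\le C_{\alpha}\langle\xi\rangle^{\mu-|\alpha|}$, which force every derivative of $p$ to be polynomially bounded. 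Hence both products $\widehat{f}\mathfrak{p}^{-1}$ and $p\mathfrak{p}^{-1}$ are legitimate elements of $\mathcal{S}'(\mathbf{R}^n)$, and so is $u=\mathcal{F}^{-1}(\widehat{f}\mathfrak{p}^{-1})$, because $\mathcal{F}^{-1}$ maps $\mathcal{S}'(\mathbf{R}^n)$ into itself.

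Next, by the definition of $P$ we have $\widehat{u}=\widehat{f}\mathfrak{p}^{-1}$, so that $Pu=\mathcal{F}^{-1}(p\widehat{u})=\mathcal{F}^{-1}(p(\widehat{f}\mathfrak{p}^{-1}))$. The heart of the argument is the rearrangement $p(\widehat{f}\mathfrak{p}^{-1})=\widehat{f}(p\mathfrak{p}^{-1})$, which I would justify by the associativity of multiplication of a tempered distribution by elements of $\mathcal{O}_M(\mathbf{R}^n)$: for $\phi\in\mathcal{S}(\mathbf{R}^n)$, unwinding the definitions gives $\bigl(p(\widehat{f}\mathfrak{p}^{-1})\bigr)(\phi)=\mathfrak{p}^{-1}(\widehat{f}p\phi)=\mathfrak{p}^{-1}(p\widehat{f}\phi)=\bigl(\widehat{f}(p\mathfrak{p}^{-1})\bigr)(\phi)$, where I have used that $p$ and $\widehat{f}$ are both slowly increasing and smooth, so that $\widehat{f}p\phi=p\widehat{f}\phi\in\mathcal{S}(\mathbf{R}^n)$. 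Invoking Lemma~\ref{lemma:osobina}, $p\mathfrak{p}^{-1}=1$, so the right-hand side collapses to $\widehat{f}\cdot 1=\widehat{f}$; hence $p\widehat{u}=\widehat{f}$ and $Pu=\mathcal{F}^{-1}\widehat{f}=f$, as claimed.

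I expect no serious obstacle here; once Lemma~\ref{lemma:osobina} is in hand the proof is essentially bookkeeping. The only two points deserving care are (i) confirming via Paley--Wiener--Schwartz that $\widehat{f}\in\mathcal{O}_M(\mathbf{R}^n)$, so that all the products above are genuine operations on $\mathcal{S}'(\mathbf{R}^n)$, and (ii) the associativity step $p(\widehat{f}\mathfrak{p}^{-1})=\widehat{f}(p\mathfrak{p}^{-1})$, which is where a single line of explicit verification with test functions is warranted, and which is the main (though mild) technical point of the argument.
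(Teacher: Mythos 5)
Your proposal is correct and follows essentially the same route as the paper: both reduce the claim to Lemma~\ref{lemma:osobina} and verify $p\cdot(\widehat{f}\mathfrak{p}^{-1})=\widehat{f}$ by the test-function computation $\bigl(p(\widehat{f}\mathfrak{p}^{-1})\bigr)(\phi)=\mathfrak{p}^{-1}(p\widehat{f}\phi)=\int\widehat{f}\phi=\widehat{f}(\phi)$, using Paley--Wiener--Schwartz to make sense of multiplication by $\widehat{f}$. Your explicit remark that $\widehat{f}$ and all its derivatives are polynomially bounded (so $\widehat{f}\phi\in\mathcal{S}(\mathbf{R}^n)$) is a slightly more careful statement of a point the paper leaves implicit, but it is the same argument.
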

\begin{proof}
Applying the Fourier transform to $Pu=f$ we get the equivalent expression $p\widehat{u}=\widehat{f}$, still in the sense of tempered distributions. Since $f$ is a compactly supported distribution, it is of some finite positive integer order $\ell$. By Theorem 7.1.14 in~\cite{HI}, we have $\widehat{f}\in C^{\infty}(\mathbf{R}^n)$, and by the Paley-Wiener-Schwartz theorem~\cite[Theorem 7.3.1 on p. 181]{HI} the function $\widehat{f}$ satisfies $|\widehat{f}(\xi)|\le C(1+|\xi|)^{\ell}$ for some constant $C$ and all $\xi\in\mathbf{R}^n$. Finally, by definition, $p\in C^{\infty}(\mathbf{R}^n)$ and $p\phi\in\mathcal{S}(\mathbf{R}^n)$ for every $\phi\in\mathcal{S}(\mathbf{R}^n)$, so
\[
\widehat{u}(p\phi)=p\widehat{u}(\phi)=\widehat{f}(\phi)=\int_{\mathbf{R}^n}\widehat{f}(\xi)\phi(\xi),\quad\phi\in\mathcal{S}(\mathbf{R}^n).
\]
Thus, any tempered distribution $v$ satisfying
\begin{equation}\label{eqn:sat}
v(p\phi)=\int_{\mathbf{R}^n}\phi(\xi),\quad\phi\in\mathcal{S}(\mathbf{R}^n),
\end{equation}
provides a solution $\widehat{u}=\widehat{f}v$ of $p\widehat{u}=\widehat{f}$ in $\mathcal{S}'(\mathbf{R}^n)$, since
\[
p\cdot(\widehat{f}v)(\phi)=v(p\widehat{f}\phi)=\int_{\mathbf{R}^n}\widehat{f}(\xi)\phi(\xi)=\widehat{f}(\phi),\quad\phi\in\mathcal{S}(\mathbf{R}^n).
\]
But we have shown in Lemma~\ref{lemma:osobina} that $\mathfrak{p}^{-1}$, defined in~\eqref{eqn:p_inverse}, satisfies~\eqref{eqn:sat}.
\end{proof}
\begin{lemma}\label{lemma:formula} If $u=\mathcal{F}^{-1}(\widehat{f}\mathfrak{p}^{-1})$ then, for every positive $\varrho$ and integer $m$,
\begin{align}\label{eqn:desired}
(-1)^n(2\pi)^{n-1}i^{-m}\widehat{U}^{\mathcal{C}}_{\varrho,m}&=\sum_{j=1}^N\sum_{k=1}^{q_j}c_{jk}\left[(r-r_j)_+^{-k}+(-1)^k(r-r_j)_{-,r_j}^{-k}\right]\otimes1_{S^{n-1}}\left(\frac{r^{n-1}\Psi_{\varrho,m}}{g}\right)\nonumber\\&-\sum_{j=1}^N\sum_{k=n}^{q_j}c_{jk}\frac{(-1)^{k-n}\ln r_j}{(k-n)!}\delta_0^{(k-n)}\otimes1_{S^{n-1}}(\Psi_{\varrho,m}/g),
\end{align}
where
\begin{equation}\label{eqn:Psi}
\Psi_{\varrho,m}(r\omega)=\widehat{f}(r\omega)\chi(r\omega/\varrho)J_m(Rr|\omega.\widetilde{e}|)e^{-im\angle(\omega.\widetilde{e})},\,\,r\ge0,\,\,\omega\in S^{n-1},
\end{equation}
and $\widetilde{e}=e_1+ie_2$.
\end{lemma}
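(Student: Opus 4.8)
The plan is to compute $\widehat{U}^{\mathcal{C}}_{\varrho,m}$ directly from its definition \eqref{eqn:FRT} by inserting the pointwise formula \eqref{eqn:urho} for $u_\varrho$ and then recognizing the angular integral as the quantity $\mathcal{T}_m$ evaluated in Lemma~\ref{lemma:integral}. Writing $x(\theta)=e_1R\cos\theta+e_2R\sin\theta$, I would start from
\[
\widehat{U}^{\mathcal{C}}_{\varrho,m}=(-2\pi)^{-n}\int_{\theta=0}^{2\pi}e^{-im\theta}(\widehat{u}_\varrho)_\xi\bigl(e^{ix(\theta).\xi}\bigr)\,d\theta
\]
and push the $\theta$-integral inside the distribution. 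This is the first key step: since $\widehat{u}_\varrho=\chi(\cdot/\varrho)\widehat{u}\in\mathcal{E}'(\mathbf{R}^n)$ is compactly supported and $\theta\mapsto e^{ix(\theta).\xi}$ is a continuous map from the compact interval $[0,2\pi]$ into $C^\infty(\mathbf{R}^n)$, the interchange is legitimate, and after it the test function becomes precisely $\mathcal{T}_m(\xi)=\int_0^{2\pi}e^{-im\theta}e^{iR((e_1.\xi)\cos\theta+(e_2.\xi)\sin\theta)}\,d\theta$. This is the $\mathcal{T}_m$ of Lemma~\ref{lemma:integral} with $a=R$, $X_1=e_1.\xi$ and $X_2=e_2.\xi$, so that $X_1+iX_2=\widetilde{e}.\xi=r(\omega.\widetilde{e})$ when $\xi=r\omega$.

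Next I would apply Lemma~\ref{lemma:integral} to obtain $\mathcal{T}_m(\xi)=2\pi i^m J_m(Rr|\omega.\widetilde{e}|)e^{-im\angle(\omega.\widetilde{e})}$ wherever $\omega.\widetilde{e}\neq0$. Using the hypothesis $u=\mathcal{F}^{-1}(\widehat{f}\mathfrak{p}^{-1})$, i.e.\ $\widehat{u}=\widehat{f}\mathfrak{p}^{-1}$, together with the smoothness of $\widehat{f}$ recalled in Lemma~\ref{lemma:particul}, I rewrite
\[
(\widehat{u}_\varrho)_\xi(\mathcal{T}_m)=\widehat{u}\bigl(\chi(\cdot/\varrho)\mathcal{T}_m\bigr)=\mathfrak{p}^{-1}\bigl(\widehat{f}\,\chi(\cdot/\varrho)\,\mathcal{T}_m\bigr).
\]
Comparing with \eqref{eqn:Psi}, the argument here is exactly $2\pi i^m\Psi_{\varrho,m}$. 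A point I would flag explicitly: although \eqref{eqn:Psi} displays the factor $e^{-im\angle(\omega.\widetilde{e})}$, which is singular on the set $\omega.\widetilde{e}=0$, the function $\widehat{f}\,\chi(\cdot/\varrho)\,\mathcal{T}_m$ is genuinely smooth and compactly supported, being $\widehat{f}$ times $\chi(\cdot/\varrho)$ times the $\theta$-integral $\mathcal{T}_m$, which is $C^\infty$ as an integral over a compact interval of a jointly smooth integrand. Hence $\Psi_{\varrho,m}\in C_0^\infty(\mathbf{R}^n)\subset\mathcal{S}(\mathbf{R}^n)$, so $\mathfrak{p}^{-1}$ may legitimately be applied to it, and the right-hand side of \eqref{eqn:Psi} is to be read as this smooth function with the apparent singularity removable (matching the Kronecker-delta branch of Lemma~\ref{lemma:integral} on the zero set).

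Finally, collecting the prefactors via $(-2\pi)^{-n}\cdot 2\pi i^m=(-1)^n(2\pi)^{1-n}i^m$ and multiplying through by $(-1)^n(2\pi)^{n-1}i^{-m}$ reduces the identity to $(-1)^n(2\pi)^{n-1}i^{-m}\widehat{U}^{\mathcal{C}}_{\varrho,m}=\mathfrak{p}^{-1}(\Psi_{\varrho,m})$, and the claimed expression \eqref{eqn:desired} follows by substituting $\phi=\Psi_{\varrho,m}$ into the explicit formula \eqref{eqn:p_inverse} for $\mathfrak{p}^{-1}$. The main obstacle is the careful justification of the interchange of the angular integral with the action of $\widehat{u}_\varrho$, together with the verification that $\Psi_{\varrho,m}$ is Schwartz despite its singular-looking representation; once these are settled, everything else is bookkeeping of constants and a direct appeal to Lemma~\ref{lemma:integral} and the definition \eqref{eqn:p_inverse}.
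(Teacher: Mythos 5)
Your proposal is correct and takes essentially the same route as the paper's proof: interchange the $\theta$-integral with the action of the compactly supported distribution $\widehat{u}_{\varrho}$, evaluate the resulting angular integral via Lemma~\ref{lemma:integral} with $a=R$, $X_1=e_1.\xi$, $X_2=e_2.\xi$, use $\widehat{u}=\widehat{f}\mathfrak{p}^{-1}$, and substitute into~\eqref{eqn:p_inverse}. Your explicit verification that $\Psi_{\varrho,m}$ is smooth (the apparent singularity of the phase factor on the set $\omega.\widetilde{e}=0$ being removable) is a point the paper leaves implicit, and is a sound addition.
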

\begin{proof}
In Section~\ref{section:introduction} we argued that $u$ is well-defined pointwise on $\mathcal{C}$ and $u_{\varrho}$ is well-defined pointwise on the whole $\mathbf{R}^n$. Therefore, writing $x(\theta)=e_1R\cos\theta+e_2R\sin\theta$ for $\theta\in[0,2\pi]$, using~\eqref{eqn:urho} and Lemma~\ref{lemma:integral}, as well as using the existence of $\widehat{U}^{\mathcal{C}}_{\varrho,m}$ and the linearity and the continuity of $\widehat{u}_{\varrho}$, we find that
\begin{eqnarray*}
\widehat{U}^{\mathcal{C}}_{\varrho,m}&=&\int_{\theta=0}^{2\pi}e^{-im\theta}u_{\varrho}(x(\theta))=(-2\pi)^{-n}\int_{\theta=0}^{2\pi}e^{-im\theta}(\widehat{u}_{\varrho})_{\xi}(e^{ix(\theta).\xi})\\&=&(-2\pi)^{-n}(\widehat{u}_{\varrho})_{\xi}\left(\int_{\theta=0}^{2\pi}e^{-im\theta}e^{ix(\theta).\xi}\right)\\&=&(-1)^n(2\pi)^{1-n}i^m\widehat{u}\left(\chi(\xi/\varrho)J_m(R|\xi.e_1+i\xi.e_2|)e^{-im\angle(e_1.\xi+ie_2.\xi)}\right)\\&=&(-1)^n(2\pi)^{1-n}i^m\mathfrak{p}^{-1}\left(\widehat{f}(\xi)\chi(\xi/\varrho)J_m(R|\xi.e_1+i\xi.e_2|)e^{-im\angle(e_1.\xi+ie_2.\xi)}\right)\hspace{-1mm}.
\end{eqnarray*}
This gives the desired expression~\eqref{eqn:desired} in light of~\eqref{eqn:p_inverse} and of the facts that
\[
|\xi.e_1+i\xi.e_2|=r|\omega.(e_1+ie_2)|\quad{\rm for}\,\,\xi\in\mathbf{R}^n
\]
and
\[
\angle(e_1.\xi+ie_2.\xi)=\angle r\omega.(e_1+ie_2)=\angle\omega.(e_1+ie_2)\,\,{\rm for}\,\,r>0.
\]
\end{proof}
\begin{remark}
It is a standard result that
\[
J_m(Rr)=\sqrt{\frac{2}{\pi Rr}}\cos(Rr-(2m+1)\pi/4)+O((Rr)^{-3/2}),\quad m\in\mathbf{Z},
\]
for $Rr\gg m^2$. Now for general $f\in\mathcal{E}'(\mathbf{R}^n)$ the function $\widehat{f}$ is not necessarily rapidly decaying, so neither is the argument
\[
\widehat{f}(\xi)J_m(R|\xi.e_1+i\xi.e_2|)\exp{-im\angle(e_1.\xi+ie_2.\xi)}
\]
of the tempered distribution $\mathfrak{p}^{-1}$ in the proof of Lemma~\ref{lemma:formula} if $\chi(\xi/\varrho)$ is omitted. This illustrates the need for the cut-off function $\chi(\cdot/\varrho)$ and for analyzing the approximate spectrum $\widehat{U}^{\mathcal{C}}_{\varrho,m}$, $\varrho>0$.
\end{remark}
\begin{corollary}\label{cor:asymp_particul}
If $u=\mathcal{F}^{-1}(\widehat{f}\mathfrak{p}^{-1})$ then there is a constant $c'$ and, for every positive $\varrho$, a constant $c_{\varrho}$ such that
\begin{align*}
\left|\widehat{U}^{\mathcal{C}}_{\varrho,m}\right|&\le c'+c_{\varrho}\sum_{j=1}^N\left(\max\{1,|m|^{q_j}\}|J_m(Rr_j)|+\max\{1,|m|^{q_j-1}\}|J_{m+1}(Rr_j)|\right),\quad m\in\mathbf{Z}.
\end{align*}
\end{corollary}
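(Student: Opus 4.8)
The plan is to estimate the right-hand side of the identity \eqref{eqn:desired} from Lemma~\ref{lemma:formula} term by term. First I would unfold each radial distribution occurring there. Using the defining formulas \eqref{eqn:rplus} and \eqref{eqn:rminus}, their $\varrho$-truncated analogues, and the pullback by $\tau_{r_j}$, every summand $[(r-r_j)_+^{-k}+(-1)^k(r-r_j)_{-,r_j}^{-k}]\otimes 1_{S^{n-1}}(r^{n-1}\Psi_{\varrho,m}/g)$ becomes a finite combination of (i) logarithmically weighted integrals $\int\ln|r-r_j|\,\partial_r^k[\,\cdot\,]\,dr$ over a bounded $r$-interval and (ii) point evaluations of $\partial_r^{k'}[\,\cdot\,]$, $k'\le k-1$, at $r=r_j$ and at the truncation radius, each followed by integration over $\omega\in S^{n-1}$. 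That the $r$-interval is bounded is exactly what the factor $\chi(\cdot/\varrho)$ in \eqref{eqn:Psi} provides, since $\Psi_{\varrho,m}$ is supported in $\{|\xi|\le 2\varrho\}$; this boundedness is the source of the $\varrho$-dependent constant $c_\varrho$.

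Next I would perform all $r$-differentiations by Leibniz' rule. In $r^{n-1}\widehat f(r\omega)\chi(r\omega/\varrho)g(r\omega)^{-1}J_m(Rr|\omega\cdot\widetilde e|)e^{-im\angle(\omega\cdot\widetilde e)}$ the only factor whose magnitude depends on $m$ is $J_m(Rr|\omega\cdot\widetilde e|)$; differentiating it at most $k$ times, the chain rule contributes bounded powers of $R|\omega\cdot\widetilde e|\le R$ and Lemma~\ref{lemma:diff_Bessel} gives the pointwise bound $\lesssim|m|^k|J_m(Rr|\omega\cdot\widetilde e|)|+|m|^{k-1}|J_{m+1}(Rr|\omega\cdot\widetilde e|)|$. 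Every other factor and all its $r$-derivatives are smooth and uniformly bounded on the bounded support — here one uses $\widehat f\in C^\infty$ with the Paley--Wiener--Schwartz polynomial bound, $\chi\in C_0^\infty$, and the smoothness and boundedness of $1/g$ away from $\xi=0$ — so they are swept into $c_\varrho$. Tracking the largest powers, the logarithmic integral with $k=q_j$ produces the leading $|m|^{q_j}$ on the $J_m$-term and $|m|^{q_j-1}$ on the $J_{m+1}$-term, while the point terms carry strictly lower powers. The $\delta_0^{(k-n)}$ summands are treated separately: since $J_m$ vanishes to order $|m|$ at the origin, $\partial_r^{k-n}(\Psi_{\varrho,m}/g)|_{r=0}=0$ once $|m|>k-n$, so these terms vanish for all but finitely many $m$; as $\chi$ equals $1$ at the origin they are moreover bounded uniformly in $\varrho$, and they yield the $\varrho$-independent constant $c'$.

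The hard part will be reconciling the arguments of the Bessel factors. After the steps above the surviving factors carry the arguments $Rr|\omega\cdot\widetilde e|$, with $r$ ranging up to $2\varrho$ and $|\omega\cdot\widetilde e|\le 1$, whereas the asserted bound contains only the single argument $Rr_j$; a naive pointwise bound of $|J_m(Rr|\omega\cdot\widetilde e|)|$ by a supremum over $[0,2R\varrho]$ is far too lossy, because $J_m$ increases with its argument below the turning point and would replace $Rr_j$ by $2R\varrho$. The key will be to show that the effective argument localizes to $Rr_j$. I would exploit two features: the integration over $S^{n-1}$ concentrates the mass near $|\omega\cdot\widetilde e|=1$, since $J_m(Rr|\omega\cdot\widetilde e|)$ behaves like $|\omega\cdot\widetilde e|^{|m|}$ near the set where $\omega\perp\operatorname{span}\{e_1,e_2\}$; and, more decisively, the compact support of $f$ enters beyond the mere polynomial bound, for by Paley--Wiener the angular Fourier mode of $\widehat f(r\,\cdot)$ picked out by $e^{-im\angle}$ is itself an order-$m$ Bessel factor with argument proportional to the radius of $\operatorname{supp}f$. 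The resulting product of two order-$m$ Bessel functions decays so rapidly that the radial tail $r\to 2\varrho$ is suppressed and the whole contribution is dominated by $|m|^{q_j}|J_m(Rr_j)|+|m|^{q_j-1}|J_{m+1}(Rr_j)|$ once $|m|$ exceeds a $\varrho$-dependent threshold. On the complementary finite range of $m$ the right-hand side is strictly positive — $J_m(Rr_j)$ and $J_{m+1}(Rr_j)$ have no common zero — and $|\widehat U^{\mathcal C}_{\varrho,m}|$ is finite, so a sufficiently large $c_\varrho$ (together with the insertion of $\max\{1,\cdot\}$ to cover small $|m|$) absorbs these values and completes the estimate.
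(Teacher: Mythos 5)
Your first three steps coincide with the paper's own proof: the paper derives precisely your unfolding (its equation \eqref{eqn:consequence}), splits the logarithmically weighted integral \eqref{eqn:integ_important} into three radial pieces via integration by parts, disposes of the $\delta_0^{(k-n)}$ terms and the point evaluations at the origin using $J_m(0)=\delta^{\rm Kr}_{m=0}$ (this is where the $\varrho$-independent constant $c'$ comes from), and converts radial derivatives of $J_m$ into powers of $|m|$ via Lemma~\ref{lemma:diff_Bessel}, with the same bookkeeping yielding the exponents $q_j$ and $q_j-1$. You also correctly isolate the crux that the paper's write-up passes over quickly: after Leibniz, the Bessel factor in \eqref{eqn:Psi} carries the argument $Rr|\omega\cdot\widetilde{e}|$ with $r$ running up to $2\varrho$, and a supremum bound over $[0,2R\varrho]$ overshoots the target $|J_m(Rr_j)|$ by a factor growing geometrically in $|m|$.

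The gap is in your decisive step. After the azimuthal integration, what multiplies $J_m(Rr|\omega\cdot\widetilde{e}|)$ is the $m$-th angular Fourier mode of the \emph{whole} non-Bessel factor $\widehat{f}\,\chi(\cdot/\varrho)/g$, not of $\widehat{f}$ alone. For $\widehat{f}$ your claim is correct: applying Lemma~\ref{lemma:integral} under $f_y(\cdot)$ and using the finite order of $f$, its modes decay like $C^{|m|}/|m|!$, an order-$m$ Bessel factor as you say. But $\chi$ and $g$ are only assumed smooth; neither is required to be radial or angularly real-analytic, so their restrictions to the circles $\{|\xi|=r\}$ with $\varrho\le r\le2\varrho$ have Fourier modes decaying faster than any polynomial but, in general, no faster than that (e.g.\ like $e^{-\sqrt{|m|}}$ for a Gevrey-type cutoff). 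Convolving the mode sequences, the $m$-th mode of the product is generically of the size of the zeroth mode of $\widehat{f}$ times the $m$-th mode of $\chi/g$, i.e.\ only super-polynomially small: the factorial decay you rely on is destroyed. Super-polynomial decay cannot defeat the geometric discrepancy $(2\varrho/r_j)^{|m|}$ separating $\sup_{r\le2\varrho}|J_m(Rr)|\approx(R\varrho)^{|m|}/|m|!$ from $|J_m(Rr_j)|\approx(Rr_j/2)^{|m|}/|m|!$, so the asserted domination for $|m|$ beyond a $\varrho$-dependent threshold does not follow under the paper's hypotheses; it would follow if $\chi$ and $g$ were radial (their modes are then trivial) or angularly analytic. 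The paper closes this step by a different and more elementary device that never invokes the Paley--Wiener structure of $\widehat{f}$: on each radial piece it bounds $|\phi^{(\ell)}(r+r_j)|$ for all relevant $r$ by $C'_\ell|\phi^{(\ell)}(r_j)|$, with $C'_\ell$ \emph{defined} as the corresponding ratio, so that after integration by parts only point evaluations at radius $r_j$ survive and the argument $Rr_j$ appears automatically, all remaining dependence being absorbed into $c_\varrho$. (Whether those ratio constants are uniform in $m$ is exactly the delicate point you are trying to address, and the paper treats it tersely; but your Paley--Wiener substitute, as formulated, does not close it.)
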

\begin{proof}
Fix $m\in\mathbf{Z}$, $j\in\{1,\dots,N\}$, and $k\in\{1,\dots,q_j\}$. A derivation similar to that in our proof of Lemma~\ref{lemma:osobina} shows that
\begin{eqnarray}\label{eqn:consequence}
(k-1)!\left[(r-r_j)_+^{-k}+(-1)^k(r-r_j)_{-,r_j}^{-k}\right]\otimes1_{S^{n-1}}\left(\frac{r^{n-1}\Psi_{\varrho,m}(r\omega)}{g(r\omega)}\right)\nonumber\\=-\int_{r=-r_j}^{\infty}(\ln|r|)\partial_r^k\left((r+r_j)^{n-1}\int_{\omega\in S^{n-1}}\frac{\Psi_{\varrho,m}((r+r_j)\omega)}{g((r+r_j)\omega)}\right)\hspace{-1mm}(r)\\+\sum_{\nu=0}^{k-2}(-r_j)^{-k+\nu+1}(k-\nu-2)!\partial_r^{\nu}\left((r+r_j)^{n-1}\int_{\omega\in S^{n-1}}\frac{\Psi_{\varrho,m}((r+r_j)\omega)}{g((r+r_j)\omega)}\right)\hspace{-1mm}(-r_j),\nonumber
\end{eqnarray}
where $\Psi_{\varrho,m}$ is defined in~\eqref{eqn:Psi}. The absolute value of the last sum in~\eqref{eqn:consequence}, and of the last double sum in~\eqref{eqn:desired}, are bounded by
\[
c'(r_j,k)\sum_{\nu=0}^{q_j-2}|\partial_r^{\nu}J_m(0)|,
\]
which via Lemma~\ref{lemma:diff_Bessel} leads to a bound of the form
\[
c'(r_j,k)|J_0(0)|+c''(r_j,k)\sum_{\nu=1}^{q_j-2}\left(|m|^{\nu}|J_m(0)|+|m|^{\nu-1}|J_{m+1}(0)|\right),
\]
that is, of the form $c'(r_j,k)+c''(r_j,k)$ when we recall that $J_0(0)=1$ and that $J_m(0)=0$ for any nonzero integer $m$. In light of~\eqref{eqn:desired}, it now remains to estimate the integral
\begin{equation}\label{eqn:integ_important}
\int_{r=-r_j}^{\infty}(\ln|r|)\partial_r^k\left((r+r_j)^{n-1}\int_{\omega\in S^{n-1}}\frac{\Psi_{\varrho,m}((r+r_j)\omega)}{g((r+r_j)\omega)}\right)\hspace{-1mm}(r)
\end{equation}
occurring in~\eqref{eqn:consequence}. We split the integral into three parts, and estimate each part separately. Fix $\omega\in S^{n-1}$ and $\varepsilon\in(0,\min\{1,r_j\})$. To simplify the notation, we write $\phi(r)=r^{n-1}1_{S^{n-1}}\Psi_{\varrho,m}(r\omega)/g(r\omega)$ for $r\ge0$. In particular, $\phi\in C^{\infty}(0,\infty)$ and $\partial_r^{\ell}\phi(r)=0$ for $r>2\varrho$ and for all $\ell\in\mathbf{N}_0$. First, if
\[
C'_k\ge1+\frac{\max_{-\varepsilon\le t<\infty}|t^{k+1}\phi^{(k+1)}(t+r_j)|}{(k+1)!|\phi^{(k)}(r_j)|}
\]
then $|\phi^{(k)}(r+r_j)|\le C'_k|\phi^{(k)}(r_j)|$ and consequently
\begin{align*}
\left|\int_{r=-\varepsilon}^{\varepsilon}\ln(|r|)\phi^{(k)}(r+r_j)\right|\le-2C'_k\left|\phi^{(k)}(r_j)\right|\int_{r=0}^{\varepsilon}\ln r=2C'_k\varepsilon(1+|\ln\varepsilon|)\left|\phi^{(k)}(r_j)\right|.
\end{align*}
Next, repeated integration by parts gives
\begin{align*}
\left|\int_{r=\varepsilon}^\infty(\ln r)\phi^{(k)}(r+r_j)\right|&=\left|\int_{r=\varepsilon}^{2\varrho}(\ln r)\phi^{(k)}(r+r_j)\right|\\&=\Biggl|-\ln(\varepsilon)\phi^{(k-1)}(r_j+\varepsilon)+\sum_{\ell=1}^{k-1}(\ell-1)!\phi^{(k-\ell-1)}(r_j+\varepsilon)\varepsilon^{-\ell}\\&-(k-1)!\int_{r=\varepsilon}^{2\varrho}\phi(r+r_j)r^{-k}\Biggr|\\&\le|\ln\varepsilon|\cdot\left|\phi^{(k-1)}(r_j+\varepsilon)\right|+\sum_{\ell=0}^{k-2}(k-\ell-2)!\varepsilon^{\ell-k+1}\left|\phi^{(\ell)}(r_j+\varepsilon)\right|\\&+\delta^{\rm Kr}_{k=1}C_0'\left|\ln(2\varrho/\varepsilon)\right|\cdot\left|\phi(r_j)\right|+\delta^{\rm Kr}_{k\ge2}C_0'(k-2)!|(2\varrho)^{1-k}-\varepsilon^{1-k}|\cdot\left|\phi(r_j)\right|.
\end{align*}
Finally, repeated integration by parts gives
\begin{align*}
\left|\int_{r=-r_j}^{-\varepsilon}\ln(|r|)\phi^{(k)}(r+r_j)\right|&=\Biggl|-\ln(r_j)\phi^{(k-1)}(0)+\sum_{\ell=2}^k(-1)^{\ell+1}r_j^{1-\ell}\phi^{(k-\ell)}(0)(\ell-2)!\\&+\ln(\varepsilon)\phi^{(k-1)}(r_j-\varepsilon)+\sum_{\ell=2}^k(-1)^{\ell}(\ell-2)!\varepsilon^{1-\ell}\phi^{(k-\ell)}(r_j-\varepsilon)\\&+(-1)^{k+1}(k-1)!\int_{r=\varepsilon}^{r_j}r^{-k}\phi(r_j-r)\Biggr|\\&\le c''(r_j,\varepsilon,k)\sum_{\ell=0}^{k-1}C'_{\ell}\left|\phi^{(\ell)}(r_j)\right|+\delta^{\rm Kr}_{k=1}C_0'\left|\ln(r_j/\varepsilon)\right|\cdot\left|\phi(r_j)\right|\\&+\delta^{\rm Kr}_{k\ge2}C_0'(k-2)!\left|r_j^{1-k}-\varepsilon^{1-k}\right|\cdot\left|\phi(r_j)\right|.
\end{align*}
In conclusion, the absolute value of the integral in~\eqref{eqn:integ_important} is bounded from above by a sum of the form
\[
c'(\varrho,r_j,\varepsilon,k)\sum_{\nu=0}^{q_j}|\partial_r^{\nu}J_m(Rr_j)|,
\]
which via Lemma~\ref{lemma:diff_Bessel} leads to an estimate of the form
\[
c''(\varrho,r_j,\varepsilon,k)\left[|J_m(Rr_j)|+\sum_{\nu=1}^{q_j}\left(|m|^{\nu}|J_m(Rr_j)|+|m|^{\nu-1}|J_{m+1}(Rr_j)|\right)\right],
\]
hence to estimates of the form
\[
c'''(\varrho,r_j,\varepsilon,k)\left[J_0(Rr_j)|+|J_1(Rr_j)|\right],\quad{\rm for}\,\,\,m=0,
\]
and
\[
c'''(\varrho,r_j,\varepsilon,k)\left(|m|^{q_j}|J_m(Rr_j)|+|m|^{q_j-1}|J_{m+1}(Rr_j)|\right),\quad {\rm for}\,\,\,m\neq0.
\]
\end{proof}

It remains to characterize the homogeneous solutions.
Assume $u\in\mathcal{S}'^d(\mathbf{R}^n)$ satisfies $Pu=0$ in $\mathcal{S}'(\mathbf{R}^n)$.
\begin{lemma}\label{lemma:PWd}
$\widehat{u}\in\mathcal{E}'^d(\mathbf{R}^n)$.
\end{lemma}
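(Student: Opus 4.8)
The plan is to establish the two defining features of $\mathcal{E}'^d$ in turn: first that $\widehat{u}$ is compactly supported, then that it has order at most $d$. I would begin by applying $\mathcal{F}$ to $Pu=0$ to obtain $p\widehat{u}=0$ in $\mathcal{S}'(\mathbf{R}^n)$. Because $|g|$ is bounded below by a positive constant, the form \eqref{eqn:p} shows that the zero set of the smooth function $p$ is exactly the compact set $Z=\bigcup_{j=1}^N\{\xi\in\mathbf{R}^n:|\xi|=r_j\}$, and that $\{p\neq0\}$ is open. For any $\phi\in C_0^\infty(\mathbf{R}^n)$ with $\supp\phi\cap Z=\emptyset$ the quotient $\phi/p$ lies in $C_0^\infty(\mathbf{R}^n)\subset\mathcal{S}(\mathbf{R}^n)$, so $\widehat{u}(\phi)=\widehat{u}(p\cdot\phi/p)=(p\widehat{u})(\phi/p)=0$. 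Hence $\widehat{u}$ vanishes off $Z$, i.e. $\supp\widehat{u}\subseteq Z$ and $\widehat{u}\in\mathcal{E}'(\mathbf{R}^n)$.

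To control the order, I would fix once and for all a cut-off $\kappa\in C_0^\infty(\mathbf{R}^n)$ equal to $1$ on a neighbourhood of $Z$, and set $K=\supp\kappa$. For any $\psi\in C^\infty(\mathbf{R}^n)$ we then have $\widehat{u}(\psi)=\widehat{u}(\kappa\psi)=u(\widehat{\kappa\psi})$, with $\kappa\psi\in C_0^\infty(\mathbf{R}^n)\subset\mathcal{S}(\mathbf{R}^n)$. Applying the order-$d$ estimate \eqref{eqn:S'd} for $u$ with $\phi=\widehat{\kappa\psi}$ gives $|\widehat{u}(\psi)|\le C\sum_{|\alpha|\le d}\sup_{x}\langle x\rangle^d|\partial^\alpha\widehat{\kappa\psi}(x)|$.

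It then remains to convert the weighted sup-norms of $\widehat{\kappa\psi}$ into $C^d$-seminorms of $\psi$ on $K$. Using $\langle x\rangle^d\le C\sum_{|\beta|\le d}|x^\beta|$ (which follows from $|x|\le\sum_i|x_i|$) together with the elementary Fourier identities that turn multiplication by $x^\beta$ and differentiation $\partial_x^\alpha$ into $\partial_\xi^\beta$ and multiplication by $\xi^\alpha$, each term becomes, up to a constant, the Fourier transform of $\partial_\xi^\beta(\xi^\alpha\kappa\psi)$ with $|\alpha|,|\beta|\le d$. Bounding the supremum of a Fourier transform by the $L^1$-norm of its argument, and noting that $\partial_\xi^\beta(\xi^\alpha\kappa\psi)$ is supported in the fixed compact set $K$ and, by Leibniz' rule, dominated there by $C\sum_{|\gamma|\le d}\sup_K|\partial^\gamma\psi|$ (the remaining factors, coming from $\xi^\alpha\kappa$ and its derivatives, being bounded on $K$), I would arrive at $|\widehat{u}(\psi)|\le C'\sum_{|\gamma|\le d}\sup_{x\in K}|\partial^\gamma\psi(x)|$. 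This is precisely the assertion that $\widehat{u}$ is a compactly supported distribution of order at most $d$, that is, $\widehat{u}\in\mathcal{E}'^d(\mathbf{R}^n)$.

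The main obstacle is the book-keeping in this last step: one must verify that passing the weight $\langle x\rangle^d$ through the Fourier transform lets at most $d$ derivatives fall on $\psi$, so that only $C^d$-seminorms appear in the end. The multi-index combinatorics of Leibniz' rule, and the separation of the derivatives landing on $\psi$ from those landing on the fixed factor $\xi^\alpha\kappa$, are the points that require care, although both are routine.
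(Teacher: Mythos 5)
Your proof is correct and follows essentially the same route as the paper's: the identical support argument via $\widehat{u}(\phi)=p\widehat{u}(\phi/p)=0$ off the zero set of $p$, followed by the same order estimate using a cutoff equal to $1$ near that zero set, the bound $\langle x\rangle^d\le C\sum_{|\beta|\le d}|x^\beta|$, the exchange of powers and derivatives under $\mathcal{F}$, an $L^1$ bound on the Fourier transform, and Leibniz' rule. The only differences are notational (your $\kappa,\psi$ play the roles of the paper's $\psi,\phi$).
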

\begin{proof}
If $\phi\in C_0^{\infty}(\mathbf{R}^n)$ has its support in the complement of the null-set $p^{-1}(\{0\})$ of $p$ then $\phi/p\in C_0^{\infty}(\mathbf{R}^n)$ and $\widehat{u}(\phi)=p\widehat{u}(\phi/p)=0$, so $\supp\widehat{u}\subseteq p^{-1}(\{0\})$ and in particular $\widehat{u}\in\mathcal{E}'(\mathbf{R}^n)$. To find the order of $\widehat{u}$, first note that
\begin{align*}
    \langle\xi\rangle^d
    \le
    (1+d\max_{j} |\xi_j|^2 )^\frac{d}{2}
    &\le
    d^{\frac{d}{2}}(1+\max_{j} |\xi_j|)^d \\
    &\le d^{\frac{d}{2}}\Big(1+\sum_{j}|\xi_j|\Big)^d 
    \le
    d^{\frac{d}{2}} (d+1)! \sum_{|\beta|\leq d} |\xi^\beta|,
\end{align*}
and let $\psi\in C_0^{\infty}(\mathbf{R}^n)$ satisfy $\psi(\xi)=1$ for $|\xi|\le\max_j\{r_j\}$. Then, for any $\phi\in C^{\infty}(\mathbf{R}^n)$, we obtain the estimates
\begin{align*}
|\widehat{u}(\phi)|&=|\widehat{u}(\psi\phi)|=|u(\widehat{\psi\phi})|\le C\sum_{|\alpha|\le d}\sup_{\xi\in\mathbf{R}^n}\langle\xi\rangle^d|\partial^{\alpha}\widehat{\psi\phi}(\xi)|\\
&\le C'\sum_{|\alpha|\le d}\sup_{\xi\in\mathbf{R}^n} \Big[ \sum_{|\beta|\le d} |\xi^\beta| \Big] |\mathcal{F}(x^{\alpha}\psi\phi)(\xi)|\\
&\le C'' \sum_{|\alpha|\le d} \sum_{|\beta|\le d} \int_{x\in\supp\psi}|\partial^\beta_x [x^{\alpha}(\psi\phi)(x)]| \\
&\le C'''\sup_{x\in\supp\psi}\sum_{|\beta|\le d}|\partial^{\beta}\phi(x)|,
\end{align*}
where the last inequality follows after repeated application of Leibnitz' rule, and the constants $C'$, $C''$ and $C'''$ are all $\phi$-independent. 
\end{proof}
Define $\Phi : (0,\infty) \times S^{n-1} \to \mathbf{R}^n\setminus \{0\}$ by $\Phi(r,\omega) = r\omega$.
\begin{lemma} 
There are $u_{k,j} \in \mathcal{D}'^{d-k}(S^{n-1})$ such that
\begin{equation}\label{eqn:one}
u(x)
=  
\sum^N_{j=1} \sum^d_{k=0} \left(\partial^k_r \delta_{r_j}(r) \otimes u_{k,j}(\omega)\right)\left( e^{ir x.\omega} \right),
\quad
x \in \mathbf{C}^n,
\end{equation}
where
\begin{align}\label{eqn:two} 
\sum_{k=q_j}^d  \binom{k}{q_j} (-1)^k  u_{k,j} = 0,
\quad
j\in \{1, \cdots, N\}.
\end{align}
\end{lemma}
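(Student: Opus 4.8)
The plan is to reduce the statement to the structure theory of distributions supported on a submanifold. First I would use Lemma~\ref{lemma:PWd} to place $\widehat{u}\in\mathcal{E}'^d(\mathbf{R}^n)$, and then observe that, since $p$ is elliptic and $|g|$ is bounded below by a positive constant, the zero set of $p$ is exactly the disjoint union of the spheres $\{|\xi|=r_j\}$, $j=1,\dots,N$. From $p\widehat{u}=0$ together with the invertibility of $p$ away from these spheres one gets, exactly as in the opening of the proof of Lemma~\ref{lemma:PWd}, that $\supp\widehat{u}\subseteq\bigcup_{j}\{|\xi|=r_j\}$, a disjoint union of smooth compact hypersurfaces on which the radial variable is the natural transverse coordinate.

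Next I would pull $\widehat{u}$ back under the polar diffeomorphism $\Phi(r,\omega)=r\omega$ and apply the structure theorem for distributions supported on a hypersurface (H\"ormander's Theorem 2.3.5): a distribution of order $d$ supported on $\{r=r_j\}$ is a finite sum $\sum_{k=0}^d\partial_r^k\delta_{r_j}(r)\otimes u_{k,j}(\omega)$ of transverse (radial) derivatives whose tangential coefficients satisfy $u_{k,j}\in\mathcal{D}'^{d-k}(S^{n-1})$, the loss of $k$ orders being precisely the number of radial derivatives carried by the $k$-th term. Summing over the disjoint spheres and pairing this representation of $\Phi^{\ast}\widehat{u}$ with the Fourier--Laplace kernel $e^{irx.\omega}$ (the pairing, and the entire extension to $x\in\mathbf{C}^n$, being legitimate by Paley--Wiener--Schwartz since $\widehat{u}\in\mathcal{E}'$) yields~\eqref{eqn:one} with the stated order bounds; the smooth nonvanishing Jacobian $r^{n-1}$ and the normalization constant only carry over into the kernel, leaving the coefficients $u_{k,j}$ intact.

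For the relation~\eqref{eqn:two} I would test the identity $p\widehat{u}=0$ against carefully engineered functions. Fix $j$ and an arbitrary $\psi\in C^{\infty}(S^{n-1})$. Because the cofactor $b_j:=p/(r-r_j)^{q_j}$, namely $g$ times the remaining radial factors, is smooth and nonvanishing near $\{r=r_j\}$, I can choose $\phi\in C_0^{\infty}(\mathbf{R}^n)$ supported near the $j$-th sphere and away from the others with $(r^{n-1}p\phi)(r\omega)=(r-r_j)^{q_j}\psi(\omega)e^{r-r_j}$ in a neighborhood of $r=r_j$; this is just division by the invertible factor $r^{n-1}b_j$. The elementary identity $\partial_r^k[(r-r_j)^{q_j}e^{r-r_j}]|_{r=r_j}=q_j!\binom{k}{q_j}$ (only the Leibniz term in which all $q_j$ derivatives land on $(r-r_j)^{q_j}$ survives) then collapses the expansion of $0=\langle\widehat{u},p\phi\rangle$ to $q_j!\langle\sum_{k=q_j}^d(-1)^k\binom{k}{q_j}u_{k,j},\psi\rangle=0$, the sign $(-1)^k$ coming from the distributional derivative $\partial_r^k\delta_{r_j}$. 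As $\psi$ is arbitrary, this is precisely~\eqref{eqn:two}.

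I expect the main obstacle to be the polar-coordinate distributional bookkeeping: making the pullback under $\Phi$, the Jacobian $r^{n-1}$, and the normalization all refer coherently to the single family of coefficients $u_{k,j}$ appearing in both~\eqref{eqn:one} and~\eqref{eqn:two}, while tracking the order bounds $u_{k,j}\in\mathcal{D}'^{d-k}(S^{n-1})$. The device of dividing out the nonvanishing cofactor $b_j$ to prescribe the jet $(r-r_j)^{q_j}\psi(\omega)e^{r-r_j}$ is exactly what exposes the weight $\binom{k}{q_j}(-1)^k$. It is worth remarking that testing instead against $(r-r_j)^{q_j}\psi(\omega)w(r)$ for varying $w$ yields the stronger vanishing $u_{k,j}=0$ for every $k\ge q_j$, from which~\eqref{eqn:two} is immediate; thus~\eqref{eqn:two} is a convenient weakening of what the same argument in fact delivers, and one need only record the single relation used later.
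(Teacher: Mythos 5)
Your proposal is correct and takes essentially the same route as the paper: Lemma~\ref{lemma:PWd} plus the support argument, a cutoff decomposition over the disjoint spheres $\{|\xi|=r_j\}$, polar pullback and H\"ormander's Theorem~2.3.5 for the layer representation with the order bookkeeping $u_{k,j}\in\mathcal{D}'^{d-k}(S^{n-1})$, and the same Leibniz computation (your exponential radial profile $e^{r-r_j}$ is exactly the test-function choice implicit in the paper's final display) to extract~\eqref{eqn:two}. Your closing observation is also accurate: the argument in fact forces $u_{k,j}=0$ for all $k\ge q_j$, of which~\eqref{eqn:two} is a weakening, and this stronger vanishing is moreover what is preserved when the Jacobian factor $r^{n-1}$ and normalization constants are absorbed into the coefficients in passing to~\eqref{eqn:one}.
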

\begin{proof}

Since $\widehat{u}\in\mathcal{E}'^d(\mathbf{R}^n)$, we can use cutoffs to arrive at a decomposition $\widehat{u} = \sum_{j=1}^N v_j$ for some distributions $v_j\in\mathcal{E}'^d(\mathbf{R}^n)$ with $\supp(v_j) \subset \Phi(\{r_j\}\times S^{n-1})$. Then, for every $\phi \in C^\infty_0(\mathbf{R}^n \setminus\{0\})$ we have
\begin{align*}
0 = pv_j(\phi)=(\Phi^*v_j)\left(\left(g\prod_{k=1}^N(r-r_k)^{q_k}\right)(\phi\circ \Phi)|\det d\Phi |\right),
\end{align*}
and this leads to
\begin{align*}
(\Phi^*v_j(r,\omega))((r-r_j)^{q_j} \phi(r,\omega)) = 0,
\quad
\phi \in C^\infty_0( (0,\infty) \times S^{n-1} ).
\end{align*}
By~\cite[Theorem~2.3.5]{HI} or~\cite[Example~5.1.2]{HI},
\begin{align*}
\Phi^*v_j = \sum_{k=0}^d \partial^k_r \delta_{r_j} \otimes u_{k,j},
\end{align*}
where the distributions $u_{k,j} \in \mathcal{D}'^{d-k}(S^{n-1})$ must satisfy
\begin{align*}
0&= \sum_{k=0}^d \partial^k \delta_{r_j}(r) \otimes u_{k,j}(\omega)\left((r-r_j)^{q_j} \phi(r,\omega)\right) 
\\&=
\sum_{k=q_j}^d  \binom{k}{q_j} (-1)^k u_{k,j}(\omega)(\partial^{k-q_j}_r\phi(r_j,\omega)), \quad\phi \in C^\infty_0( (0,\infty) \times S^{n-1} ).
\end{align*}
\end{proof}
In the following, we assume without loss of generality that the radius $R$ of the great circle $\mathcal{C}$ is 1. Inserting~\eqref{eqn:one} into~\eqref{eqn:FC}, we get
\begin{align*}
    \widehat{U}^{\mathcal{C}}_m
    &=
    \sum^N_{j=1} \sum^d_{k=0} u_{k,j}(\omega)\left((-1)^k \lim_{r \to r_j} \partial_r^k \int_{0}^{2\pi} e^{ir ( \omega_1 \cos(\theta) + \omega_2 \sin(\theta)) -im\theta} \, d\theta \right) 
    \\&= 
    \sum^N_{j=1} \sum^d_{k=0} u_{k,j}(\omega)\left([g_{k,j}(m)](\omega)  \right),
\end{align*}
where
\begin{align*}
    g_{k,j}(m)(\omega) = 2\pi i^m (-1)^k |\omega_1+i \omega_2|^k \partial_r^k  J_m(r_j |\omega_1+i \omega_2|)e^{-i m\arg (\omega_1+i\omega_2)}.
\end{align*}
Note that each $g_{k,j}$ is smooth, so the above action of $u_{k,j}$ on $g_{k,j}$ is well-defined. In fact, each $g_{k,j}$ is in hyperspherical coordinates given by
\begin{align*}
    g_{k,j}(m)(\omega) 
    &= 
    2\pi i^m (\omega_1 - i\omega_2)^m 
    \sum_{2s+m\geq k}^\infty \frac{(-1)^{s+k} (2s+m)_k}{s!(s+m)!2^{2s+m}}
    r^{2s+m-k}_j (\omega_1^2 + \omega_2^2)^{s} \\
    &=
    2\pi i^m(-1)^k e^{-im\theta_{n-1}} \Big(\prod_{s=2}^{n-1} \sin(\theta_{n-s}) \Big)^{k} \partial^k_r J_m\Big(r_j \prod_{s=2}^{n-1} \sin(\theta_{n-s})  \Big),
\end{align*}
where $\theta_1, \cdots, \theta_{n-2}$ run in $[0,\pi)$, $\theta_{n-1}$ in $[0,2\pi)$, and
\begin{align*}
\omega_1 = \sin(\theta_1) \cdots \sin(\theta_{n-2})\sin(\theta_{n-1}), \\
\omega_2 = \sin(\theta_1) \cdots \sin(\theta_{n-2})\cos(\theta_{n-1}).
\end{align*}
Since $u_{k,j}$ is of order $d-k$ there are $A_{k,j} \in \textnormal{Diff}^{d-k}(S^{n-1})$ such that 
\begin{align*}
    \left|\widehat{U}^{\mathcal{C}}_m\right|
    \le C\Big(\sum^N_{j=1} \sum^d_{k=0}  \sup_{\omega\in S^{n-1}} |  A_{k,j} [g_{k,j}(m)](\omega) | \Big)
\end{align*}
for some constant $C$. Using Leibniz' rule and Lemma~\ref{lemma:diff_Bessel}, we get
\begin{align*}
    \left|\widehat{U}^{\mathcal{C}}_m\right|
    &\le C'\sum^N_{j=1} \sum^d_{k=0} \sum_{s=0}^{d-k} \sum_{s'=0}^{d-k-s} |m|^s|\partial_r^{k+s'}J_m(r_j)|
    =C'\sum_{j=1}^N\sum_{k=0}^d\sum_{s=0}^{d-k}\sum_{s''=k}^{d-s}|m|^s|\partial_r^{s''}J_m(r_j)|
    \\&\le \widetilde{C}\sum_{j=1}^N\sum_{s=0}^d|m|^s|J_m(r_j)|+C''\sum^N_{j=1} \sum_{\substack{k,s,s''\\s''\ge1}}\left(|m|^{s+s''}|J_m(r_j)|+|m|^{s+s''-1}|J_{m+1}(r_j)|\right)\\&\le C'''\sum_{j=1}^N\left(\max\{1,|m|^d|\}|J_m(r_j)|+\delta^{\rm Kr}_{d\ge1}\max\{1,|m|^{d-1}\}|J_{m+1}(r_j)|\right),\quad m\in\mathbf{Z},
\end{align*}
for some constant $C'$, $C''$, $C'''$ and $\widetilde{C}$. We have here used that $s+s''\le d$ since the index $s''$ ranges up to $d-s$. Returning to general values of the radius $R$, we have thus shown
\begin{theorem}\label{thm:homogen}
If $u \in \mathcal{S}'^d(\mathbf{R}^n)$ solves $Pu = 0$ in $\mathcal{S}'(\mathbf{R}^n)$ then there is a constant $c$ satisfying
\begin{align*}
    \left|\widehat{U}^{\mathcal{C}}_m\right|\le c\sum_{j=1}^N\left(\max\{1,|m|^d\}|J_m(r_jR)|+\delta^{\rm Kr}_{d\ge1}\max\{1,|m|^{d-1}\}|J_{m+1}(r_jR)|\right) ,\quad m\in\mathbf{Z}.
\end{align*}
\end{theorem}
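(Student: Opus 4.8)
The plan is to use that, for a homogeneous solution, $\widehat{u}$ is at once compactly supported of finite order and concentrated on the zero set of the symbol, and then to reduce the trace coefficients to finitely many sup-norms of derivatives of Bessel functions. Assuming as above that $R=1$, I would first note that $Pu=0$ gives $p\,\widehat{u}=0$, so $\supp\widehat{u}\subseteq p^{-1}(\{0\})=\bigcup_{j=1}^N\{\,\xi:|\xi|=r_j\,\}$, while Lemma~\ref{lemma:PWd} supplies $\widehat{u}\in\mathcal{E}'^d(\mathbf{R}^n)$. Cutting $\widehat{u}$ into pieces $v_j$ supported near the single sphere $|\xi|=r_j$ and pulling back by $\Phi(r,\omega)=r\omega$, the structure theorem for distributions supported on a submanifold~\cite[Theorem~2.3.5, Example~5.1.2]{HI} produces the normal-derivative expansion $\Phi^{*}v_j=\sum_{k=0}^d\partial_r^k\delta_{r_j}\otimes u_{k,j}$ with tangential coefficients $u_{k,j}\in\mathcal{D}'^{d-k}(S^{n-1})$; the identity $(r-r_j)^{q_j}\Phi^{*}v_j=0$ then yields the linear constraints~\eqref{eqn:two}, which however play no role in the size estimate---only the orders $d-k$ matter.

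Next I would insert the resulting pointwise representation~\eqref{eqn:one} of $u$ into the Fourier-coefficient formula~\eqref{eqn:FC}. Differentiating the great-circle integral $k$ times in $r$ and evaluating at $r_j$ turns, by Lemma~\ref{lemma:integral}, each summand into the pairing of $u_{k,j}$ with an explicit smooth function $g_{k,j}(m)$ on $S^{n-1}$ built from $\partial_r^kJ_m(r_j|\omega_1+i\omega_2|)$ and a unimodular phase. Because $u_{k,j}$ has order $d-k$ on the compact manifold $S^{n-1}$, the pairing is dominated by the $C^{\,d-k}$-norm of $g_{k,j}(m)$, i.e.\ by $\sup_{\omega}|A_{k,j}[g_{k,j}(m)](\omega)|$ for a suitable $A_{k,j}\in\textnormal{Diff}^{d-k}(S^{n-1})$. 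The problem is thereby reduced to bounding finitely many sup-norms of tangential derivatives, of total order $\le d-k$, of the $g_{k,j}(m)$.

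The estimation is then purely a matter of bookkeeping the two sources of growth in $m$. Expanding $A_{k,j}$ by Leibniz' rule in hyperspherical coordinates, each of the $\le d-k$ tangential derivatives either strikes the phase and the $(\prod\sin)$-envelope, emitting a factor of at most $|m|$, or differentiates the Bessel argument and so raises the radial order of $J_m$; a term with $s$ phase-hits and radial order raised by $s'$ thus satisfies $s+s'\le d-k$ and carries $|m|^s\,\partial_r^{\,k+s'}J_m(r_j)$. Writing $s''=k+s'$ one has $s+s''\le d$, and Lemma~\ref{lemma:diff_Bessel} converts $\partial_r^{s''}J_m(r_j)$ into $|m|^{s''}|J_m(r_j)|+|m|^{s''-1}|J_{m+1}(r_j)|$, so every term is controlled by $|m|^{s+s''}|J_m(r_j)|$ or $|m|^{s+s''-1}|J_{m+1}(r_j)|$ with $s+s''\le d$. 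Since the $\max\{1,\cdot\}$ truncation absorbs the $m=0$ case (where all positive powers of $|m|$ vanish) and the $J_{m+1}$ contributions occur only when some radial order $s''\ge1$ is present---which forces $d\ge1$---collecting terms yields the coefficients $\max\{1,|m|^d\}$ and $\delta^{\rm Kr}_{d\ge1}\max\{1,|m|^{d-1}\}$. Undoing the normalization $R=1$ replaces $r_j$ by $r_jR$, giving the stated bound.

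I expect the principal obstacle to be precisely this order accounting: one must confirm that the radial order $k$ carried by $\partial_r^k\delta_{r_j}$ and the tangential order $\le d-k$ carried by $u_{k,j}$ never combine, after Leibniz and Lemma~\ref{lemma:diff_Bessel}, to produce a power of $|m|$ exceeding $d$ (respectively $d-1$ on the $J_{m+1}$ terms). Keeping the inequality $s+s''\le d$ intact through the chain rule on $J_m(r_j|\omega_1+i\omega_2|)$---where tangential differentiation feeds into radial differentiation---is the delicate point; the remaining ingredients are direct applications of the structure theorem, Lemma~\ref{lemma:integral}, and Lemma~\ref{lemma:diff_Bessel}.
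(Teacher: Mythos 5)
Your proposal is correct and follows essentially the same route as the paper's own proof: Lemma~\ref{lemma:PWd}, the sphere-by-sphere decomposition and pullback by $\Phi$, the structure theorem giving $\Phi^*v_j=\sum_k\partial_r^k\delta_{r_j}\otimes u_{k,j}$ with $u_{k,j}\in\mathcal{D}'^{d-k}(S^{n-1})$, pairing against the smooth functions $g_{k,j}(m)$ via operators $A_{k,j}\in\textnormal{Diff}^{d-k}(S^{n-1})$, and the Leibniz/Lemma~\ref{lemma:diff_Bessel} bookkeeping with $s+s''\le d$. Your observation that the constraints~\eqref{eqn:two} play no role in the size estimate, and that the $J_{m+1}$ terms arise only when some radial order $s''\ge1$ occurs (hence $\delta^{\rm Kr}_{d\ge1}$), matches the paper exactly.
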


\section{Application of Theorem~\ref{thm:MAIN} to inverse source problems}\label{sec:ispHelm}

We here study the simple but important special case $P=\Delta+k^2$ in $\mathbf{R}^n$, where the 'wavenumber' $k$ is a nonzero complex constant with $\Im k\ge0$. We test numerically the upper bound of Theorem~\ref{thm:MAIN} on the position of the spectral cutoff in the measurement $u|_{\mathcal{C}}$. This spectral cutoff is directly relevant for the stability of the inverse source problem for the Helmholtz equation in $\mathbf{R}^n$.~\cite{2018a-bandwidth,2020a-Helmholtz_3D}

\subsection{Point source}\label{subsec:pointsource}

Consider first the inhomogeneous Helmholtz equation
\begin{equation}\label{eqn:Helm}
(\Delta+k^2)u=\partial_j^{\nu}\delta_y\qquad\text{in}\,\,\,\textbf{R}^n,
\end{equation}
where the inhomogeneity is a 'point source located at $y\in\mathbf{R}^n$.' Here $j\in\{1,\dots,n\}$ and $\nu\in\mathbf{N}_0$ are fixed. To ensure uniqueness of solution of~\eqref{eqn:Helm}, we impose the appropriate Sommerfeld radiation condition~\cite[Eq. (7.2)]{Mitrea-1996}. The unique outgoing fundamental solution of the Helmholtz operator is~\cite[Eq. (16)]{McIntoshMitrea-1999}
\[
\Phi_n(x)=\begin{cases}(-2\pi|x|)^{(1-n)/2}(2ik)^{-1}\partial_{|x|}^{(n-1)/2}e^{ik|x|},\quad&x\in\mathbf{R}^n\setminus\{0\},\,\,\,n\,\,\,\text{odd},\\(-2\pi|x|)^{(2-n)/2}(4i)^{-1}\partial_{|x|}^{(n-2)/2}H_0^{(1)}(k|x|),\quad&x\in\mathbf{R}^n\setminus\{0\},\,\,\,n\,\,\,\,\text{even},\end{cases}
\]
with $H_0^{(1)}$ the Hankel function of the first kind and order zero. In particular, $\Phi_n\in C^{\infty}(\mathbf{R}^n\setminus\{0\})$. To apply Theorem~\ref{thm:MAIN} to~\eqref{eqn:Helm}, we need the order of $u=(\partial_j^{\nu}\delta_y)\ast\Phi_n=(-1)^{\nu}(\partial_j^{\nu}\Phi_n)(\cdot-y)$ as a tempered distribution.
\begin{lemma}\label{lemma:H}
$\Phi_n\in\mathcal{S}'^{(n+3)/2}(\mathbf{R}^n)$ for $n$ odd. Furthermore, $\Phi_2\in\mathcal{S}'^2(\mathbf{R}^2)$, $\Phi_4\in\mathcal{S}'^3(\mathbf{R}^4)$ and $\Phi_n\in\mathcal{S}'^4(\mathbf{R}^n)$ for $n\in\{6,8,10,\dots\}$.
\end{lemma}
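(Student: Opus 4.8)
The plan is to read the order of $\Phi_n$ directly off its explicit formula, separating the contribution of the integrable singularity at the origin from that of the slowly decaying, oscillatory far field. First I would fix a cutoff $\psi\in C_0^\infty(\mathbf{R}^n)$ with $\psi\equiv1$ near $0$ and write $\Phi_n=\psi\Phi_n+(1-\psi)\Phi_n$. The formula shows that $\Phi_n$ has a singularity of order $|x|^{(1-n)/2}$ for $n$ odd and of order $|x|^{2-n}$ (with a logarithmic term when $n=2$) for $n$ even; since $\int_{|x|\le1}|x|^{2-n}\,dx\sim\int_0^1 r\,dr<\infty$ and likewise $\int_{|x|\le1}|x|^{(1-n)/2}\,dx<\infty$, the function $\psi\Phi_n$ is integrable and compactly supported, so $|\psi\Phi_n(\phi)|\le C\sup_x|\phi(x)|$ and $\psi\Phi_n\in\mathcal{S}'^0(\mathbf{R}^n)$. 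Thus the entire order is forced by the far field, and the local singularity contributes nothing.

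For the far part I would insert the formula and reduce the pairing to one-dimensional radial integrals. For odd $n$ the operator acts trivially on the exponential, $\partial_{|x|}^{(n-1)/2}e^{ik|x|}=(ik)^{(n-1)/2}e^{ik|x|}$, so $(1-\psi)\Phi_n(x)=c\,|x|^{(1-n)/2}e^{ik|x|}$ for $|x|\ge1$; for even $n$ the standard large-argument asymptotics of $H_0^{(1)}$ and its derivatives give an expansion $(1-\psi)\Phi_n(x)=\sum_{j=0}^{M}c_j|x|^{(1-n)/2-j}e^{ik|x|}+O(|x|^{(1-n)/2-M-1})$. Writing $\Psi(r)=\int_{S^{n-1}}\phi(r\omega)\,d\omega$, which satisfies $|\Psi^{(\ell)}(r)|\le C_\ell\langle r\rangle^{-d}\sum_{|\alpha|\le\ell}\sup_x\langle x\rangle^d|\partial^\alpha\phi(x)|$, the pairing becomes a sum of integrals of the form $\int_1^\infty r^{(n-1)/2-j}e^{ikr}\Psi(r)\,dr$. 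Whenever $(n-1)/2-j<d-1$ such an integral converges absolutely against the weight $\langle r\rangle^{-d}$, and the resulting bound is controlled by the $\mathcal{S}'^d$-seminorms with $d=(n+3)/2$ for odd $n$ and with $d=2,3,4$ for $n=2,4,6$. This already settles all odd dimensions and the lowest even dimensions without invoking the oscillation.

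The hard part is the remaining slowly decaying leading terms, where absolute integrability against the target weight fails and the oscillation $e^{ikr}$ must be used. Here I would integrate by parts in $r$ via $e^{ikr}=(ik)^{-1}\partial_r e^{ikr}$, transferring the growth of $r^{(n-1)/2-j}$ onto derivatives of $\Psi$ (equivalently, onto higher $\partial^\alpha\phi$) at the cost of one power of decay per step, continuing until the residual integrand is absolutely integrable against $\langle r\rangle^{-d}$ for the claimed $d$; the boundary terms generated at $r=1$ are all controlled by low-order seminorms and add nothing beyond order $0$. The principal obstacle, and the step demanding the most care, is the claimed \emph{dimension-independent} bound $\Phi_n\in\mathcal{S}'^4(\mathbf{R}^n)$ for all even $n\ge6$: because the leading far-field decay $|x|^{(1-n)/2}$ deteriorates as $n$ grows, a uniform cutoff at order $4$ cannot come from absolute estimates and must instead rest on a genuine cancellation in the oscillatory radial integral. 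The crux is therefore to organize the integration-by-parts scheme so that the number of derivatives landing on $\Psi$, and hence the order of the seminorms invoked, stays bounded by $4$ uniformly in $n$ while the residual oscillatory integral remains convergent.
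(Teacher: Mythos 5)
Your splitting at the origin and your absolute-convergence estimates are sound, and for odd $n$ and for $n=2,4,6$ they reproduce the paper's own argument (which likewise proceeds by absolute bounds on the radial integral, using $\partial_{|x|}^{(n-1)/2}e^{ik|x|}=(ik)^{(n-1)/2}e^{ik|x|}$ and the Hankel asymptotics): the condition $(n-1)/2-j<d-1$ gives exactly $d=(n+3)/2$, resp. $d=2,3,4$. The genuine gap is the case of even $n\ge8$, which you explicitly leave open as ``the crux'' to be settled by an integration-by-parts scheme. That scheme cannot exist, because for real $k$ the claim is false in those dimensions. Concretely, test $\Phi_n$ against $\phi_R(x)=e^{-ik|x|}\psi(|x|/R)$ with $\psi\in C_0^{\infty}((1,2))$, $\psi\ge0$, $\psi\not\equiv0$: the spherical average is then $\Psi(r)=c\,e^{-ikr}\psi(r/R)$, so the phase in your radial integral $\int r^{(n-1)/2}e^{ikr}\Psi(r)\,dr$ is cancelled identically and the pairing is bounded below by $cR^{(n+1)/2}$, while every seminorm $\sup_x\langle x\rangle^{d}|\partial^{\alpha}\phi_R(x)|$ with $|\alpha|\le d$ is $O(R^{d})$ (derivatives of $e^{-ik|x|}$ and of $\psi(|x|/R)$ are uniformly bounded on $|x|\ge1$). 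Hence $\Phi_n\in\mathcal{S}'^{d}(\mathbf{R}^n)$ in the sense of \eqref{eqn:S'd} forces $d\ge(n+1)/2$, i.e.\ $d\ge n/2+1$, which exceeds $4$ as soon as $n\ge8$. The cancellation you hope to exploit can always be destroyed by the choice of test function, precisely because the seminorms in \eqref{eqn:S'd} are insensitive to oscillating phases.

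You should also know that your instinct that the order-$4$ claim ``cannot come from absolute estimates'' is correct, and that the discrepancy lies in the paper, not in your failure to find a trick. The paper's proof writes $\partial_r^{q}H_0^{(1)}(kr)=\pi_{q-2}(1/r)H_0^{(1)}(kr)+\pi_{q-1}(1/r)H_1^{(1)}(kr)$ and then, for $r\ge1$ and $q=n/2-1$, bounds this by $r^{-n/2+3}\cdot r^{-1/2}$, i.e.\ retains only the top-degree monomials of the polynomials $\pi_w(1/r)$. But these polynomials have nonvanishing constant terms (already $\partial_r^2H_0^{(1)}(kr)=-k^2H_0^{(1)}(kr)+(k/r)H_1^{(1)}(kr)$, and the constant propagates through the induction), which is just the statement that $\partial_r^{q}H_0^{(1)}(kr)$ decays like $r^{-1/2}$ and no faster. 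With the correct far-field bound, the paper's method yields $\Phi_n\in\mathcal{S}'^{\,n/2+1}(\mathbf{R}^n)$ for even $n$, matching your computation and the lower bound above, and agreeing with the stated lemma only for $n=2,4,6$ (for $\Im k>0$ the lemma is trivially true, since $\Phi_n$ then decays exponentially and has order $0$). So the honest completion of your proposal is: prove the lemma for odd $n$ and $n=2,4,6$ exactly as you do, and replace the claim for even $n\ge8$ by $\Phi_n\in\mathcal{S}'^{\,n/2+1}(\mathbf{R}^n)$; do not search for an oscillatory proof of the order-$4$ bound, since the test functions above show none can exist.
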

\begin{proof}
With $n$ odd and $\phi\in\mathcal{S}(\mathbf{R}^n)$, we have
\begin{align*}
|\Phi_n(\phi)|&=c_n\left|\int_{r=0}^{\infty}\int_{\omega\in S^{n-1}}r^{(n-1)/2}\phi(r\omega)\partial_r^{(n-1)/2}e^{ikr}\right|\\&\le c'_n\int_{r=0}^{\infty}r^{(n-1)/2}\langle r\rangle^{-N}\int_{\omega\in S^{n-1}}\langle r\rangle^N|\phi(r\omega)|\\&\le c''_n\left(\sup_{x\in\mathbf{R}^n}\langle x\rangle^N|\phi(x)|\right)\int_{r=0}^{\infty}r^{(n-1)/2}\langle r\rangle^{-N}\\&\le c'''_{n,N}\sum_{|\alpha|\le N}\sup_{x\in\mathbf{R}^n}\langle x\rangle^N|\partial^{\alpha}\phi(x)|,
\end{align*}
where $c'''_{n,N}$ is a finite constant if $N>(n+1)/2$. Next, assume
\[
\partial^q_rH_0^{(1)}(kr)=\pi_{q-2}(1/r)H_0^{(1)}(kr)+\pi_{q-1}(1/r)H_1^{(1)}(kr)
\]
for some integer $q\ge2$, and where $\pi_w$ signifies 'some polynomial of degree $w$.' Then
\begin{align*}
\partial_r^{q+1}H_0^{(1)}(kr)&=\pi_{q-1}(1/r)H_0^{(1)}(kr)+\pi_{q-2}(1/r)(-H_1^{(1)}(kr))\\&+\pi_q(1/r)H_1^{(1)}(kr)+\pi_{q-1}(1/r)(H_0^{(1)}(kr)-\frac{1}{r}H_1^{(1)}(kr))\\&=\pi_{q-1}(1/r)H_0^{(1)}(kr)+\pi_q(1/r)H_1^{(1)}(kr),
\end{align*}
and finally
\[
\partial^2_rH_0^{(1)}(kr)=H_0^{(1)}(kr)-(1/r)H_1^{(1)}(kr)=\pi_0(1/r)H_0^{(1)}(kr)+\pi_1(1/r)H_1^{(1)}(kr).
\]
Now fix $n$ even and $\phi\in\mathcal{S}(\mathbf{R}^n)$. We have
\begin{align*}
|\Phi_n(\phi)|&=c_n\left|\int_{r=0}^{\infty}\int_{\omega\in S^{n-1}}r^{n/2}\phi(r\omega)\partial_r^{n/2-1}H_0^{(1)}(kr)\right|\\&\le c_n'\int_{r=0}^{\infty}r^{n/2}\langle r\rangle^{-N}|\partial_r^{n/2-1}H_0^{(1)}(kr)|\int_{\omega\in S^{n-1}}\langle r\rangle^N|\phi(r\omega)|\\&\le c''_n\left(\sup_{x\in\mathbf{R}^n}\langle x\rangle^N|\phi(x)|\right)\int_{r=0}^{\infty}r^{n/2}\langle r\rangle^{-N}|\partial_r^{n/2-1}H_0^{(1)}(kr)|\\&\le c_n''\left(\sum_{|\alpha|\le N}\sup_{x\in\mathbf{R}^n}\langle x\rangle^N|\partial^{\alpha}\phi(x)|\right)\int_{r=0}^{\infty}r^{n/2}\langle r\rangle^{-N}|\partial_r^{n/2-1}H_0^{(1)}(kr)|.
\end{align*}
The functions $H_0^{(1)}(kr)$ and $H_1^{(1)}(kr)$ have singularities precisely at $r=0$, and these singularities are $\ln(r)$-like and $r^{-1}$-like, respectively. Therefore, if $n=2$, we have
\begin{align*}
\int_{r=0}^{\infty}r^{n/2}\langle r\rangle^{-N}|\partial_r^{n/2-1}H_0^{(1)}(kr)|&=\int_{r=0}^{\infty}\frac{r}{\langle r\rangle^N}|H_0^{(1)}(kr)|\\&\le\int_{r=0}^1\frac{r}{\langle r\rangle^N}|H_0^{(1)}(kr)|+c\int_{r\ge1}\frac{r}{\langle r\rangle^N}r^{-1/2},
\end{align*}
which is finite if $N\ge2$. If $n=4$, we have
\begin{align*}
\int_{r=0}^{\infty}r^{n/2}\langle r\rangle^{-N}|\partial_r^{n/2-1}H_0^{(1)}(kr)|&=\int_{r=0}^{\infty}\frac{r^2}{\langle r\rangle^N}|\partial_rH_0^{(1)}(kr)|=\int_{r=0}^{\infty}\frac{r^2}{\langle r\rangle^N}|H_1^{(1)}(kr)|\\&\le\int_{r=0}^1\frac{r^2}{\langle r\rangle^N}|H_1^{(1)}(kr)|+c\int_{r\ge1}\frac{r^2}{\langle r\rangle^N}r^{-1/2},
\end{align*}
which is finite if $N\ge3$. Finally, if $n\in\{6,8,10,\dots\}$ then, by the inductive argument above,
\begin{align*}
\int_{r=0}^{\infty}r^{n/2}\langle r\rangle^{-N}|\partial_r^{n/2-1}H_0^{(1)}(kr)|&\le\int_{r=0}^{\infty}\frac{r^{n/2}}{\langle r\rangle^N}|\pi_{n/2-3}(1/r)H_0^{(1)}(kr)|\\&+\int_{r=0}^{\infty}\frac{r^{n/2}}{\langle r\rangle^N}|\pi_{n/2-2}(1/r)H_1^{(1)}(kr)|\\&\le c_n+c'\int_{r=1}^{\infty}\frac{r^{n/2}}{\langle r\rangle^N}r^{-n/2+3}r^{-1/2},
\end{align*}
which is finite if $N\ge4$.
\end{proof}
\begin{remark}
Our estimates of the distributional order in $\mathcal{S}'(\mathbf{R}^n)$ of radially outgoing fundamental solutions $\Phi_n$ coincide for $n=1$ and $n=2$; for $n=3$ and $n=4$; and for $n=5$ and $n=6,8,10,\dots$.
\end{remark}
Write $d(n)$ for the order of $\Phi_n$ as a tempered distribution, estimated in Lemma~\ref{lemma:H}.
\begin{corollary}\label{corr:order} $(\partial_j^{\nu}\Phi_n)(\cdot-y)\in\mathcal{S}'^{d(n)+\nu}(\mathbf{R}^n)$.
\end{corollary}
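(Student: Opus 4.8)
The plan is to unwind the two operations---differentiation in $x_j$ and translation by $y$---directly at the level of the defining estimate~\eqref{eqn:S'd}, using that $\Phi_n\in\mathcal{S}'^{d(n)}(\mathbf{R}^n)$ by Lemma~\ref{lemma:H}. First I would record, for $\phi\in\mathcal{S}(\mathbf{R}^n)$, the identity
\[
\left((\partial_j^{\nu}\Phi_n)(\cdot-y)\right)(\phi)=(-1)^{\nu}\Phi_n\left((\partial_j^{\nu}\phi)(\cdot+y)\right),
\]
which is just the standard duality rule for translating and differentiating a tempered distribution. Writing $\psi=(\partial_j^{\nu}\phi)(\cdot+y)\in\mathcal{S}(\mathbf{R}^n)$ and feeding $\psi$ into the order-$d(n)$ bound for $\Phi_n$ reduces the whole claim to estimating $\sum_{|\alpha|\le d(n)}\sup_{x}\langle x\rangle^{d(n)}|\partial^{\alpha}\psi(x)|$ by the seminorms of $\phi$ appearing in the definition of $\mathcal{S}'^{d(n)+\nu}(\mathbf{R}^n)$.

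Next I would compute $\partial^{\alpha}\psi(x)=(\partial^{\alpha+\nu e_j}\phi)(x+y)$, where $e_j$ denotes the $j$th standard unit multi-index so that $\partial_j^{\nu}=\partial^{\nu e_j}$, and then substitute $z=x+y$. Each seminorm becomes $\sup_z\langle z-y\rangle^{d(n)}|\partial^{\alpha+\nu e_j}\phi(z)|$, and the translation by the fixed vector $y$ is absorbed by Peetre's inequality $\langle z-y\rangle^{d(n)}\le 2^{d(n)/2}\langle y\rangle^{d(n)}\langle z\rangle^{d(n)}$, which contributes only the $y$-dependent constant $2^{d(n)/2}\langle y\rangle^{d(n)}$. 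Since $\langle z\rangle\ge1$ we have $\langle z\rangle^{d(n)}\le\langle z\rangle^{d(n)+\nu}$, while the new multi-index $\beta=\alpha+\nu e_j$ satisfies $|\beta|=|\alpha|+\nu\le d(n)+\nu$. Summing over $|\alpha|\le d(n)$ then produces a bound of the form
\[
\left|\left((\partial_j^{\nu}\Phi_n)(\cdot-y)\right)(\phi)\right|\le C\sum_{|\beta|\le d(n)+\nu}\sup_z\langle z\rangle^{d(n)+\nu}|\partial^{\beta}\phi(z)|,
\]
which is precisely the defining estimate~\eqref{eqn:S'd} for membership in $\mathcal{S}'^{d(n)+\nu}(\mathbf{R}^n)$.

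There is no deep obstacle here; the argument is essentially careful bookkeeping of two increments. The only point requiring attention is the translation: one must confirm that shifting the argument by the fixed $y$ costs only a constant factor and does not inflate the polynomial weight $\langle z\rangle^{d(n)}$, which is exactly what Peetre's inequality guarantees. The differentiation simultaneously raises the admissible multi-index length and, through $\langle z\rangle^{d(n)}\le\langle z\rangle^{d(n)+\nu}$, the weight exponent, each by $\nu$; tracking these two increments consistently is what yields the stated order $d(n)+\nu$ rather than anything larger. One should also note that $d(n)+\nu\in\mathbf{N}_0$ for each relevant $n$, since the values of $d(n)$ from Lemma~\ref{lemma:H} are nonnegative integers, so the conclusion lands in a legitimately defined class $\mathcal{S}'^{d(n)+\nu}(\mathbf{R}^n)$.
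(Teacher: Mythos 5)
Your proof is correct and takes essentially the same route as the paper: the paper writes $\left((\partial_j^{\nu}\Phi_n)(\cdot-y)\right)(\phi)=\Phi_n\left((\partial_j^{\nu}\delta_y)(-\cdot)\ast\phi\right)$, and since $(\partial_j^{\nu}\delta_y)(-\cdot)\ast\phi=(-1)^{\nu}(\partial_j^{\nu}\phi)(\cdot+y)$ this is exactly your test function $\psi$, after which both arguments apply the order-$d(n)$ estimate for $\Phi_n$ and account for the translation and the $\nu$ extra derivatives. If anything, your write-up is more explicit than the paper's, which compresses the final step and leaves the Peetre-inequality absorption of the shift $y$ implicit.
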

\begin{proof}
We readily get
\begin{align*}
|(\partial_j^{\nu}\Phi_n(\cdot-y))(\phi)|&=|\Phi_n((\partial_j^{\nu}\delta_y)(-\cdot)\ast\phi)|\\&\le C\sum_{|\alpha|\le d(n)}\langle x\rangle^{d(n)}\sup_{x\in\mathbf{R}^n}|(\partial_j^{\nu}\delta_y)\ast\partial^{\alpha}\phi(x)|\\&\le C\sum_{|\alpha|\le d(n)+\nu}\langle x\rangle^{d(n)+\nu}\sup_{x\in\mathbf{R}^n}|\partial^{\alpha}\phi(x)|
\end{align*}
for any $\phi\in\mathcal{S}(\mathbf{R}^n)$.
\end{proof}
Given $R>0$, $e_1,e_2\in S^{n-1}$, $e_1\perp e_2$, and the circle \[\mathcal{C}=R(e_1\cos\theta+e_2\sin\theta),\,\,\,\theta\in[0,2\pi],\] disjoint from the source support $\{y\}$, we have $U^{\mathcal{C}}=(-1)^{\nu}(\partial_j^{\nu}\Phi_n)(\cdot-y)|_{\mathcal{C}}$ and
\begin{equation}\label{eqn:direct}
\widehat{U}^{\mathcal{C}}_m=(-1)^{\nu}\int_{\theta=0}^{2\pi}e^{-im\theta}\partial_j^{\nu}\Phi_n(e_1R\cos\theta+e_2R\sin\theta-y),\quad m\in\mathbf{Z}.
\end{equation}
We can of course compute numerically concrete values $|\widehat{U}^{\mathcal{C}}_m|$ of the measurement spectrum from~\eqref{eqn:direct}. Next, we here recall for convenience that $\chi\in C_0^{\infty}(\mathbf{R}^n)$ satisfies
\[
\chi(\xi)=\begin{cases}1,\quad&|\xi|\le 1,\\0,\quad&|\xi|\ge2,\end{cases}
\]
and that $\mathcal{F}(\partial_j^{\nu}\Phi_n(\cdot-y)_{\varrho})=\chi(\cdot/\varrho)\mathcal{F}(\partial_j^{\nu}\Phi_n(\cdot-y))=\chi(\cdot/\varrho)e^{-iy\xi}\widehat{\partial_j^{\nu}\Phi}_n\in\mathcal{E}'(\mathbf{R}^n)$ for every positive $\varrho$. Also,
\[
\widehat{U}^{\mathcal{C}}_{\varrho,m}=(-1)^{\nu}\int_{\theta=0}^{2\pi}e^{-im\theta}(\partial_j^{\nu}\Phi_n)_{\varrho}(e_1R\cos\theta+e_2R\sin\theta-y),\quad m\in\mathbf{Z}.
\]
In the present special case, we have $p(\xi)=k^2-|\xi|^2=-(|\xi|+k)(|\xi|-k)$, $g(r\omega)=-(r+k)$, $N=1$, $r_1=k$, $q_1=1$, and $c_{11}=1$. In light of this fact, as well as Lemma~\ref{lemma:H} and Corollary~\ref{corr:order}, Theorem~\ref{thm:MAIN} guarantees that, for every positive $\varrho$,
\begin{equation}\label{eqn:guarantee}
\left|\widehat{U}^{\mathcal{C}}_{\varrho,m}\right|\le c'+c_{\varrho}\times\begin{cases}\left(|m|^{(n+3+2\nu)/2}\cdot|J_m(kR)|+|m|^{(n+1+2\nu)/2}|J_{m+1}(kR)|\right),\quad n\,\,\,\text{odd},\\\left(|m|^{2+\nu}\cdot|J_m(kR)|+|m|^{1+\nu}\cdot|J_{m+1}(kR)|\right),\quad n=2,\\\left(|m|^{3+\nu}\cdot|J_m(kR)|+|m|^{2+\nu}|J_{m+1}(kR)|\right),\quad\,\, n=4,\\\left(|m|^{4+\nu}\cdot|J_m(kR)|+|m|^{3+\nu}|J_{m+1}(kR)|\right),\quad\,\,n=6,8,10,\dots,\end{cases}
\end{equation}
where $c'$ and $c_{\varrho}$ are (unknown) constants. Note that the $m$-dependent part of~\eqref{eqn:guarantee} is the same for dimensions $n=j$ and $n=j+1$ with $j=1,3,5$, as well as for $n=5$ and $n=8,10,12,\dots$. Figure~\ref{fig:compar} shows the estimate from~\eqref{eqn:guarantee}, as well as the values $|\widehat{U}^{\mathcal{C}}_m|$ from~\eqref{eqn:direct} with $\nu=0$, $n=2,3,4,5$, $m=0,\dots,100$, $k=2\pi$, $R=5.01$, and $|y|=5$. We shift and scale the computed spectra~\eqref{eqn:direct} and the upper bound estimates~\eqref{eqn:guarantee} such that they range in the interval $[-1,1]$. We do this because the absolute level of the values from~\eqref{eqn:guarantee} is arbitrary, and we are only interested in the spectral location of the onset of rapid decay of $\widehat{U}^{\mathcal C}_m$. Figure~\ref{fig:compar} indicates that Theorem~\ref{thm:MAIN} predicts this onset well for $n=2,\dots,5$, and Table~\ref{tab:1} on page~\pageref{tab:1} lists the predicted and the actual 'bandwidths' for these values of $n$.
\begin{figure}[hbt]
\hspace{0cm}\includegraphics[scale=.6]{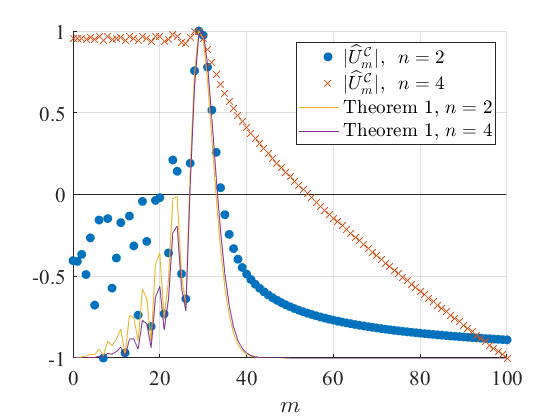}\includegraphics[scale=.6]{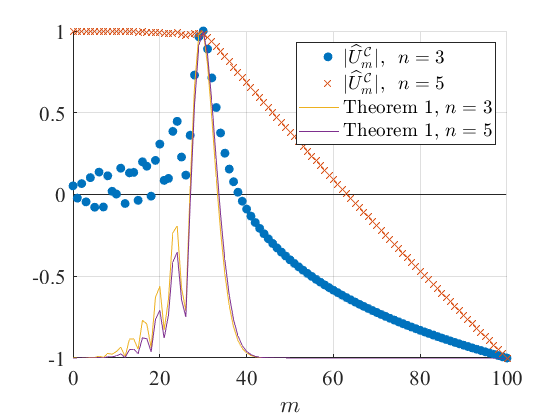}
\caption{Comparison of the actual spectrum $|\widehat{U}^{\mathcal C}_m|$ with the bound from Theorem~\ref{thm:MAIN}. The spectrum and the bound are shifted and scaled to range in $[-1,1]$. We are interested in the spectral location of the onset of rapid decay of $|\widehat{U}^{\mathcal{C}}_m|$. Parameters: $\nu=0$, $R=5.01$, $|y|=5$.}\label{fig:compar}
\end{figure}
\begin{table}[hbt]
\begin{center}
\begin{tabular}{ |c|c|c| } 
 \hline
 dimension $n$ & bandwidth predicted & actual \\
               & by Theorem~\ref{thm:MAIN} & bandwidth\\
 \hline
 2 & 29 & 29 \\ 
 3 & 30 & 30 \\ 
 4 & 30 & 29 \\
 5 & 30 & 29 \\
 \hline
\end{tabular}
\end{center}
\caption{Predicted and measured bandwidths of the spectrum $\widehat{U}^{\mathcal{C}}_m$ from Figure~\ref{fig:compar}. Parameters: $\nu=0$, $R=5.01$, $|y|=5$.}\label{tab:1}
\end{table}
Figure~\ref{fig:higherD} on page~\pageref{fig:higherD} shows the estimate from~\eqref{eqn:guarantee} and the values $|\widehat{U}^{\mathcal{C}}_m|$ from~\eqref{eqn:direct} with $\nu=0$, $n=6,7,8,9$, $m=0,\dots,100$, $k=2\pi$, $R=5$, and $y=(10,0,\cdots,0)$; hence in this case the 'measurement set' envelops the support of the 'source.'
\begin{figure}[hbt]
\hspace{0cm}\includegraphics[scale=0.6]{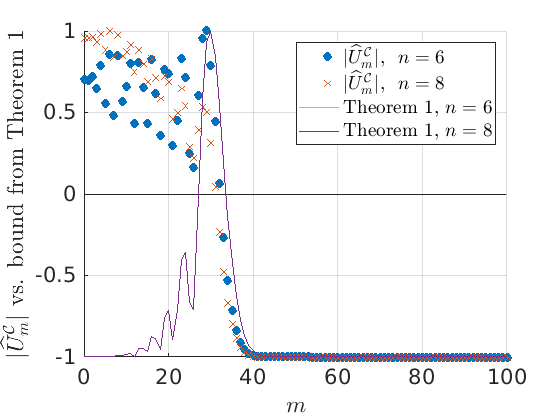}\includegraphics[scale=0.6]{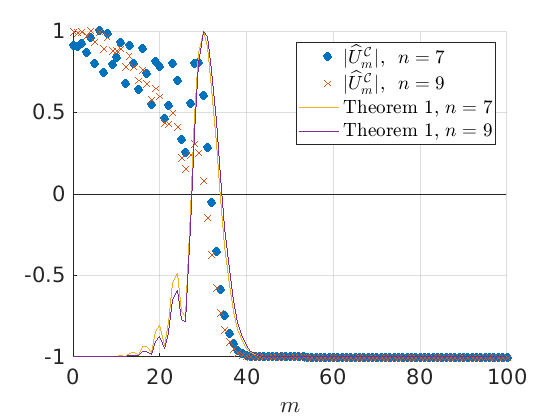}
\caption{Comparison of the actual spectrum $|\widehat{U}^{\mathcal C}_m|$ with the bound from Theorem~\ref{thm:MAIN}. The spectrum and the bound are shifted and scaled to range in $[-1,1]$. We are interested in the spectral location of the onset of rapid decay of $|\widehat{U}^{\mathcal{C}}_m|$. Parameters: $\nu=0$, $R=5$, $y=(10,0,\cdots,0)$.}\label{fig:higherD}
\end{figure}
Theorem~\ref{thm:MAIN} again lets us calculate tight upper bounds on the measurement bandwidth, as is evident in Table~\ref{tab:2}.
\begin{table}[hbt]
\begin{center}
\begin{tabular}{ |c|c|c| } 
 \hline
 dimension $n$ & bandwidth predicted & actual \\
               & by Theorem~\ref{thm:MAIN} & bandwidth\\
 \hline
 6 & 30 & 29 \\ 
 7 & 30 & 29 \\ 
 8 & 30 & 28 \\
 9 & 30 & 28 \\
 \hline
\end{tabular}
\end{center}
\caption{Predicted and measured bandwidths of the spectrum $\widehat{U}^{\mathcal{C}}_m$ from Figure~\ref{fig:higherD}. Parameters: $\nu=0$, $R=5$, $y=(10,0,\cdots,0)$.}\label{tab:2}
\end{table}

To investigate the effect of the distributional order of the 'source,' we set $j=1$ and $\nu=5$ in~\eqref{eqn:Helm}; that is, now $f=\partial_1^{5}\delta_y$. Figure~\ref{fig:hiorder} shows the estimate from~\eqref{eqn:guarantee} and the values $|\widehat{U}^{\mathcal{C}}_m|$ from~\eqref{eqn:direct} for $n=2,3,4,5$, $m=100$, $k=2\pi$, $R=5$, and $y=(10,0,\cdots,0)$. Table~\ref{tab:hiorder} lists the predicted and the actual bandwidths.
\begin{figure}[hbt]
\hspace{0cm}\includegraphics[scale=0.6]{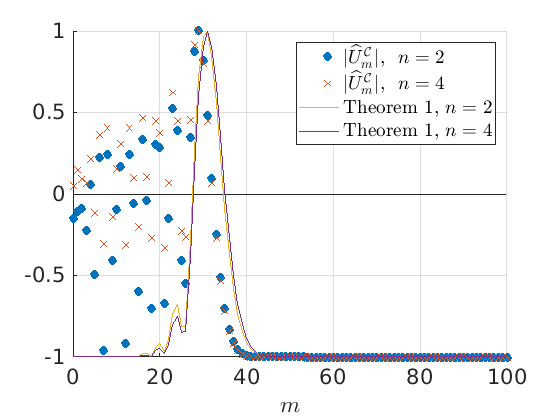}\includegraphics[scale=0.6]{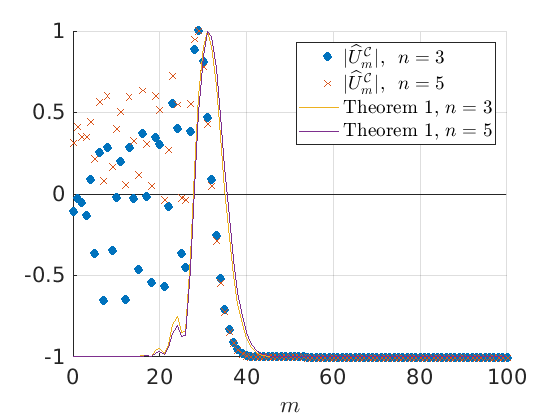}
\caption{Comparison of the actual spectrum $|\widehat{U}^{\mathcal C}_m|$ with the bound from Theorem~\ref{thm:MAIN}. The spectrum and the bound are shifted and scaled to range in $[-1,1]$. We are interested in the spectral location of the onset of rapid decay of $|\widehat{U}^{\mathcal{C}}_m|$. Parameters: $\nu=5$, $R=5$, $y=(10,0,\cdots,0)$.}\label{fig:hiorder}
\end{figure}
\begin{table}[hbt]
\begin{center}
\begin{tabular}{ |c|c|c| } 
 \hline
 dimension $n$ & bandwidth predicted & actual \\
               & by Theorem~\ref{thm:MAIN} & bandwidth\\
 \hline
 2 & 31 & 29 \\ 
 3 & 31 & 29 \\ 
 4 & 31 & 29 \\
 5 & 31 & 29 \\
 \hline
\end{tabular}
\end{center}
\caption{Predicted and measured bandwidths of the spectrum $\widehat{U}^{\mathcal{C}}_m$ from Figure~\ref{fig:hiorder}. Parameters: $\nu=5$, $R=5$, $y=(10,0,\cdots,0)$.}\label{tab:hiorder}
\end{table}

Our numerical investigation has shown that choosing $|y|\approx R$ and a low-order source in high dimension ($\nu=0$, $n\in\{6,7,8,9\}$) or a high-order source in low dimension ($\nu=5$, $n\in\{2,3,4,5\}$) results in a Fourier spectrum $\widehat{U}^{\mathcal{C}}_m$ that is quite different than what is shown in figures~\ref{fig:compar},~\ref{fig:higherD} and~\ref{fig:hiorder}, and not well-predicted by Theorem~\ref{thm:MAIN}. The prerequisites for the occurrence of this issue are consistent with the increasing severity of the singularity of the radiated field having an adverse effect on the numerical computations; however, we leave this to a future investigation. In all numerically investigated cases except when $|y|\approx R$ and $\nu=0$, $n\in\{6,7,8,9\}$ or $\nu=5$, $n\in\{2,3,4,5\}$, we found that Theorem~\ref{thm:MAIN} provides a valid upper bound on the spectral position of the onset of rapid decay for the Fourier spectrum of the measurement.

\subsection{Integrable compactly supported source}\label{subsec:integrable}

Next, we study the case where the inhomogeneity $f$ in
\[
(\Delta+k^2)u=f
\]
is a compactly supported 'source' in $L^1(\mathbf{R}^n)$. Clearly, this includes all $L^p$ sources with compact support, since $f\in\mathcal{E}'(\mathbf{R}^n)\cap L^p(\mathbf{R}^n)$ for any particular $p\in[1,\infty]$ implies $f\in L^p_{\rm loc}(\mathbf{R}^n)\subseteq L^1_{\rm loc}(\mathbf{R}^n)$, hence $f\in \mathcal{E}'^0(\mathbf{R}^n)\cap L^1(\mathbf{R}^n)$. Impose again an appropriate Sommerfeld radiation condition, and write $d(n)$ for the order of $\Phi_n$ as a tempered distribution, estimated in Lemma~\ref{lemma:H}. For any $\phi\in\mathcal{S}(\mathbf{R}^n)$ we have $f(-\cdot)\ast\phi\in\mathcal{S}(\mathbf{R}^n)$, so
\begin{align*}
|u(\phi)|&=|(f\ast\Phi_n)(\phi)|=|\Phi_n(f(-\cdot)\ast\phi)|\le C\sum_{|\alpha|\le d(n)}\langle x\rangle^{d(n)}\sup_{x\in\mathbf{R}^n}|f\ast\partial^{\alpha}\phi|\\&\le C\|f\|_{L^1(\mathbf{R}^n)}\sum_{|\alpha|\le d(n)}\langle x\rangle^{d(n)}\sup_{x\in\mathbf{R}^n}|\partial^{\alpha}\phi(x)|,
\end{align*}
and consequently $u\in\mathcal{S}'^{d(n)}(\mathbf{R}^n)$. Thus the same spectral cutoff estimates~\eqref{eqn:guarantee} apply as in Section~\ref{subsec:pointsource}.
\begin{figure}
\hspace{0cm}\includegraphics[scale=.5]{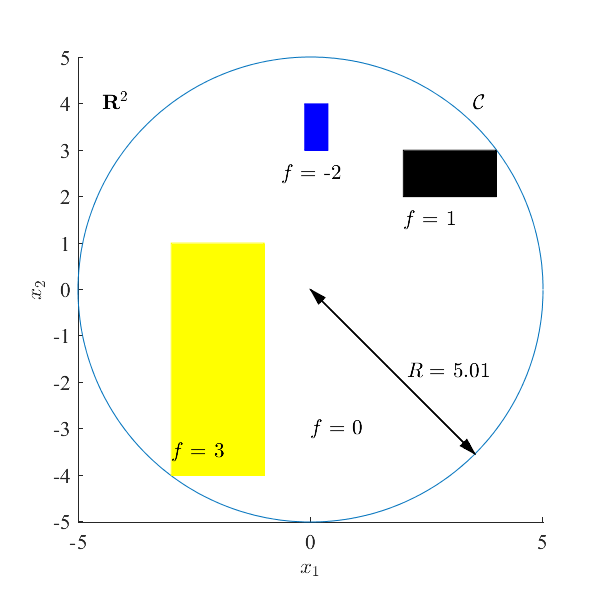}\includegraphics[scale=.6]{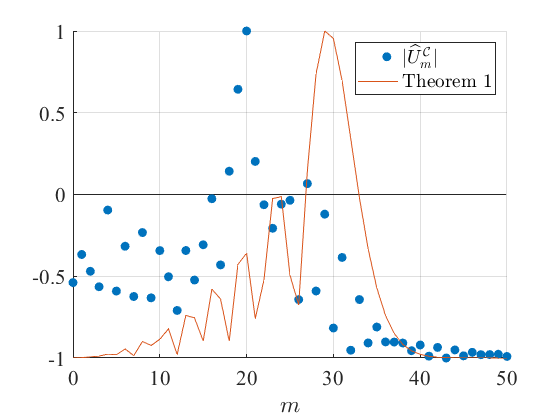}
\caption{Left: A setup with a compactly supported, piecewise constant source in $\mathbf{R}^2$. Right: Comparison of the actual spectrum $|\widehat{U}^{\mathcal{C}}_m|$ with the bound from Theorem~\ref{thm:MAIN}. The spectrum and the bound are shifted and scaled to range in $[-1,1]$. We are interested in the spectral location of the onset of rapid decay of $|\widehat{U}^{\mathcal{C}}_m|$.}\label{fig:pcsR2}
\end{figure}
Figure~\ref{fig:pcsR2} shows the estimates from~\eqref{eqn:guarantee}, as well as the numerically computed values of
\[
|\widehat{U}^{\mathcal{C}}_m|=\left|\int_{\theta=0}^{2\pi}e^{-im\theta}u(e_1R\cos\theta+e_2R\sin\theta)\right|,\quad m\in\{0,\dots,50\},
\]
with $n=2$, $k=2\pi$, $R=5.01$, $e_1=\widehat{x}_1$, $e_2=\widehat{x}_2$, and where the inhomogeneity $f$ is a piecewise constant, compactly supported function in $\mathbf{R}^2$, with $\supp f\subset\{|x|\le5\}$. In this case, the upper bound on the measurement bandwidth predicted by Theorem~\ref{thm:MAIN} is 29, and our best reading of the actual bandwidth is 27. Both the predicted and the measured bandwidths correspond well with our results in~\cite[Theorem 1 and Conjecture 1]{2018a-bandwidth}, where we estimate the bandwidth to be in the set
\[
\{{\rm argmin}_{m\in\mathbf{N}_0}\{j_{m,1}\ge kR\},\dots,{\rm argmin}_{m\in\mathbf{N}_0}\{y_{m,1}\ge kR\}\}=\{26,\dots,29\}
\]
for $kR=2\pi\cdot 5.01$. Here $j_{\mu,1}$ and $y_{\mu,1}$ are the first positive zeros of the Bessel function $J_{\mu}$ and the Neumann function $Y_{\mu}$, respectively. Our findings are also consistent with Griesmaier et al. (2014) and Griesmaier and Sylvester (2017), who estimate that the singular values of the source-to-\textit{far}-field measurement operator for the Helmholtz equation in $\mathbf{R}^2$ in this case decay rapidly when $|m|\ge kR\approx 31$.

Finally, Figure~\ref{fig:pcsR3} on page~\pageref{fig:pcsR3} shows the estimates and the actual spectrum for $n=3$, $k=2\pi$, $R=1.01$, and where the inhomogeneity $f$ is a piecewise constant, compactly supported function in $\mathbf{R}^3$, with $\supp f\subset\{|x|\le1\}$.
\begin{figure}
\hspace{0cm}\includegraphics[scale=.6]{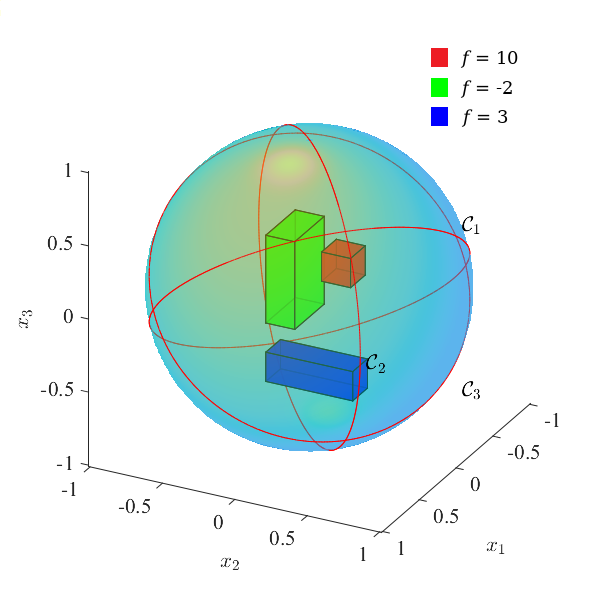}\includegraphics[scale=.6]{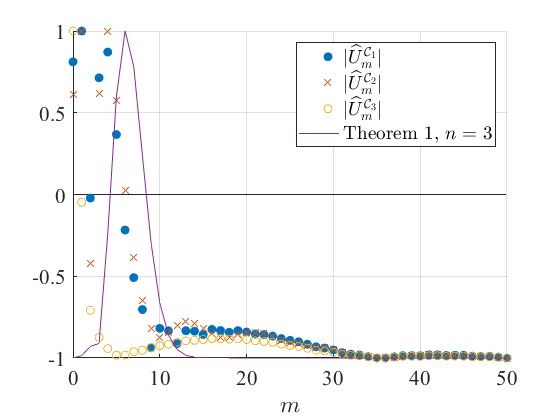}
\caption{Left: A setup with a compactly supported, piecewise constant source in $\mathbf{R}^3$. Right: Comparison of the actual spectrum $|\widehat{U}^{\mathcal{C}}_m|$ with the bound from Theorem~\ref{thm:MAIN}. The spectra and the bound are shifted and scaled to range in $[-1,1]$. We are interested in the spectral location of the onset of rapid decay of $|\widehat{U}^{\mathcal{C}_j}_m|$ for $j=1,2,3$.}\label{fig:pcsR3}
\end{figure}
We show the spectra for $u|_{\mathcal{C}}$ sampled over three different great circles of $S^2$, one of which ($\mathcal{C}_2$) is in near proximity of $\supp f$. While the bandwidth bound of Theorem~\ref{thm:MAIN} for this problem is 6, we read the actual bandwidths as 4, 4 and zero, for measurement over $\mathcal{C}_1$, $\mathcal{C}_2$ and $\mathcal{C}_3$, respectively. It is unsurprising that the bandwidth for measurement at $\mathcal{C}_3$ is extremely low, as this great circle is far removed from the support of the source $f$ and the trace of the field $u$ at $\mathcal{C}_3$ cannot feature rapid variation. In~\cite[Theorem 2]{2020a-Helmholtz_3D} we derived a lower bound on the bandwidth of the measurement in Figure~\ref{fig:pcsR3} over the whole $S^2$. This lower bound, and an associated conjectured upper bound, are in terms of the expansion of the measurement in spherical harmonics, and are given by
\[
\{{\rm argmin}_{m\in\mathbf{N}_0}\{j_{m+1/2,1}\ge kR\},\dots,{\rm argmin}_{m\in\mathbf{N}_0}\{y_{m+1/2,1}\ge kR\}\}=\{3,\dots,5\}
\]
for $kR=2\pi\cdot 1.01$. The bandwidths hence appear similar for measurements over the whole $S^2$ and along single great circles of $S^2$.

\section{Conclusion}\label{section:conclusion}

We proved Theorem~\ref{thm:MAIN} as a way to justify and compute estimates of the spectral position for the onset of rapid Fourier spectrum decay for traces of tempered solutions of a large class of multiplier equations in $\mathbf{R}^n$. The estimates were uniform for solutions up to a given order. In the process, we constructed in Lemma~\ref{lemma:particul} particular solutions of the multiplier equations and characterized in Theorem~\ref{thm:homogen} all solutions of the homogeneous multiplier equations. In Lemma~\ref{lemma:osobina} we found a rather explicit expression, Eq.~\eqref{eqn:p_inverse}, for a tempered fundamental solution of a multiplier. Finally, we successfully verified the spectral width estimates of Theorem~\ref{thm:MAIN} against numerically computed measurement spectra in several scenarios involving the inhomogeneous Helmholtz equation in $\mathbf{R}^n$ with $n=1,\dots,9$. Further investigation may be needed, however, of cases where the radiated field has severe singularities close to the measurement set, that is, to the great circle $\mathcal{C}$.

Theorem~\ref{thm:MAIN} indicates that the upper bound on the spectrum of the measurement may behave uniformly for a rather large class of multiplier equations. Interestingly, the Bessel functions appear in the estimate independently of the particular multiplier, and in fact arise from the geometry of the measurement set $\mathcal{C}$.

\bibliographystyle{amsplain}
\bibliography{loti_references}

\providecommand{\bysame}{\leavevmode\hbox to3em{\hrulefill}\thinspace}
\providecommand{\MR}{\relax\ifhmode\unskip\space\fi MR }
\providecommand{\MRhref}[2]{%
  \href{http://www.ams.org/mathscinet-getitem?mr=#1}{#2}
}
\providecommand{\href}[2]{#2}
\begin{thebibliography}{10}

\bibitem{Bao-2010}
G.~Bao, J.~Lin, and F.~Triki, \emph{A multi-frequency inverse source problem},
  Journal of Differential Equations \textbf{249} (2010), no.~12, 3443--3465.

\bibitem{Griesmaier-2012}
R.~Griesmaier, M.~Hanke, and T.~Raasch, \emph{Inverse source problems for the
  {H}elmholtz equation and the windowed {F}ourier transform}, SIAM Journal on
  Scientific Computing \textbf{34} (2012), no.~3, A1544--A1562.

\bibitem{Griesmaier-2014}
R.~Griesmaier, M.~Hanke, and J.~Sylvester, \emph{Far field splitting for the
  {H}elmholtz equation}, SIAM Journal on Numerical Analysis \textbf{52} (2014),
  no.~1, 343--362.

\bibitem{Griesmaier-2016}
R.~Griesmaier and J.~Sylvester, \emph{Far field splitting by iteratively
  reweighted $l^1$ minimization}, SIAM Journal on Applied Mathematics
  \textbf{76} (2016), no.~2, 705--730.

\bibitem{Griesmaier-2017}
\bysame, \emph{Uncertainty principles for inverse source problems, far field
  splitting, and data completion}, SIAM Journal on Applied Mathematics
  \textbf{77} (2017), no.~1, 154--180.

\bibitem{HI}
L.~H\"ormander, \emph{{The Analysis of Linear Partial Differential Operators
  I}}, 2nd ed., Springer, 2003.

\bibitem{2018a-bandwidth}
M.~Karamehmedovi\'c, \emph{Explicit tight bounds on the stably recoverable
  information for the inverse source problem}, Journal of Physics
  Communications \textbf{2} (2018), no.~9, 095021.

\bibitem{2018b-multi_freq_isp}
M.~Karamehmedovi\'c, A.~Kirkeby, and K.~Knudsen, \emph{Stable source
  reconstruction from a finite number of measurements in the multi-frequency
  inverse source problem}, Inverse Problems \textbf{34} (2018), no.~6, 065004.

\bibitem{2020a-Helmholtz_3D}
A.~Kirkeby, M.~T.~R. Henriksen, and M.~Karamehmedovi\'c, \emph{Stability of the
  inverse source problem for the {H}elmholtz equation in $\mathbf{R}^3$},
  Inverse Problems \textbf{36} (2020), no.~5, 055007.

\bibitem{McIntoshMitrea-1999}
A.~McIntosh and M.~Mitrea, \emph{Clifford {A}lgebras and {M}axwell's
  {E}quations in {L}ipschitz {D}omains}, Mathematical Methods in the Applied
  Sciences \textbf{22} (1999), 1599--1620.

\bibitem{Mitrea-1996}
M.~Mitrea, \emph{Boundary {V}alue {P}roblems and {H}ardy {S}paces {A}ssociated
  to the {H}elmholtz {E}quation in {L}ipschitz {D}omains}, Journal of
  Mathematical Analysis and Applications \textbf{202} (1996), 819--842.

\bibitem{Pierri-2020}
R.~Pierri and R.~Moretta, \emph{Asymptotic {S}tudy of the {R}adiation
  {O}perator for the {S}trip {C}urrent in {N}ear {Z}one}, Electronics
  \textbf{9} (2020), no.~911.

\bibitem{Pierri-2021}
\bysame, \emph{On the {S}ampling of the {F}resnel {F}ield {I}ntensity over a
  {F}ull {A}ngular {S}ector}, Electronics \textbf{10} (2021), no.~832.

\bibitem{Megia-2007}
I.~Santa-Mar\'ia~Meg\'ia, \emph{Which spheres admit a topological group
  structure?}, Revista de la Real Academia de Ciencias de Zaragoza \textbf{62}
  (2007), 75--79.

\bibitem{Xu-2006}
J.~Xu and R.~Janaswamy, \emph{Electromagnetic {D}egrees of {F}reedom in 2-{D}
  {S}cattering {E}nvironments}, IEEE Transactions on Antennas and Propagation
  \textbf{54} (2006), no.~12, 3882--3894.

\end{thebibliography}

\end{document}